\pdfoutput=1
\documentclass[11pt,a4paper]{article}
\usepackage[T1]{fontenc}
\usepackage[utf8]{inputenc}
\usepackage{lmodern}
\usepackage{amsmath,amssymb,amsthm}
\usepackage{graphicx}
\usepackage{booktabs}
\usepackage{microtype}
\usepackage[margin=2.6cm]{geometry}
\usepackage{xcolor}
\usepackage[colorlinks=true,linkcolor=black,citecolor=black,urlcolor=blue!50!black]{hyperref}
\usepackage{caption}
\newtheorem{theorem}{Theorem}
\newtheorem{proposition}[theorem]{Proposition}
\newtheorem{corollary}[theorem]{Corollary}
\newtheorem{lemma}[theorem]{Lemma}
\theoremstyle{definition}

\theoremstyle{remark}
\newtheorem{remark}[theorem]{Remark}

\newcommand{\R}{\mathbb{R}}
\newcommand{\Rot}{\operatorname{Rot}}
\newcommand{\vol}{\operatorname{vol}}
\newcommand{\inter}{\operatorname{int}}
\newcommand{\GCLO}{0.4354}
\newcommand{\GCHI}{0.4472}
\newcommand{\ALLMAXG}{0.3875}
\newcommand{\ALLMAXTH}{15^\circ\text{--}35^\circ}

\title{A Pythagoras Tree Grown from a Dodecahedron}
\author{Miguel Esteban P\'erez Ibarra\\[2pt]
  \small Escuela Superior de F\'isica y Matem\'aticas, Instituto Polit\'ecnico Nacional,
  Mexico City\\
  \small \texttt{supersluky@gmail.com}}
\date{October 2026}

\begin{document}
\maketitle

\begin{abstract}
The Pythagoras tree of A.\,E.~Bosman (1942) is drawn with squares in the plane
and, in the three-dimensional versions we know of, with cubes.  This article
documents a tree built instead from regular dodecahedra: each dodecahedron
carries five smaller copies, one on each edge of a single pentagonal ``crown''
face, scaled by $g\approx0.4346$ and twisted by $45^\circ$ about the edge.  The
construction first existed only as a pair of rendered images and two
undocumented C\texttt{++} programs.  We recover its exact parameters from the
original Blender scene, reformulate it as an iterated function system with
condensation, and prove with a short computer-assisted argument that it never
collides with itself: two of its dodecahedra overlap only when one is the
parent of the other.  A Pythagorean identity for the six face axes of the
dodecahedron singles out the scale $1/\sqrt5$ and the twist $\arccos(1/\sqrt5)$.
With these values each child rests flat against a face of its parent, the five
children together have exactly the parent's surface area, and the crown of the
tree is a fractal of Hausdorff dimension exactly~$2$.  Finally we ask whether
the tree can grow on all twelve faces at once.  A parity argument shows that
at the artwork's proportions it cannot, but smaller scale factors admit
certified clash-free all-faces trees.
\end{abstract}

\noindent\small\textbf{Keywords:} Pythagoras tree, dodecahedron, iterated
function system, self-similar fractal, tight frame, mathematical art.\\
\textbf{MSC 2020:} 28A80, 52B10, 00A66.
\normalsize

\begin{figure}[t]
  \centering
  \includegraphics[width=0.495\linewidth]{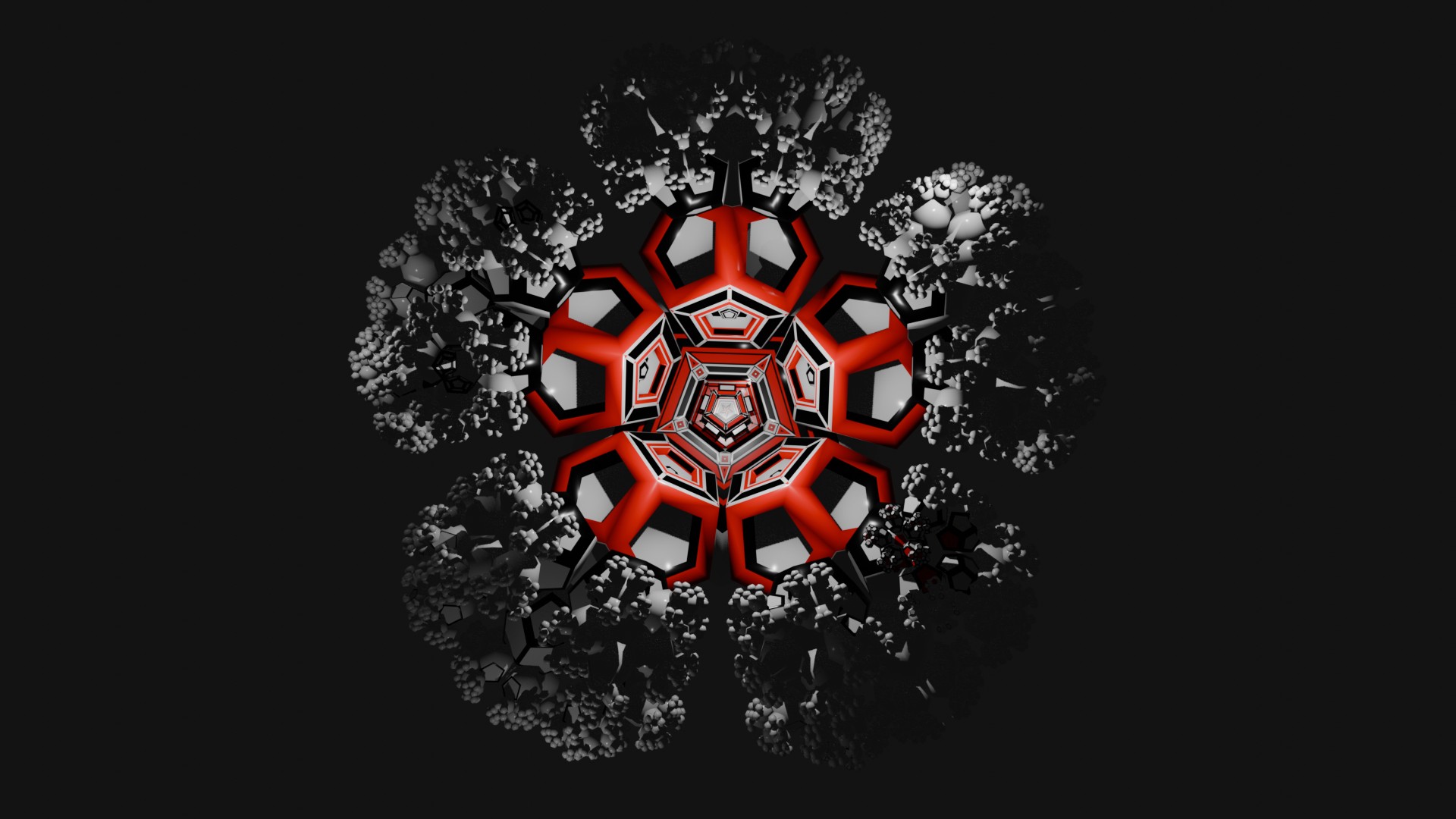}\hfill
  \includegraphics[width=0.495\linewidth]{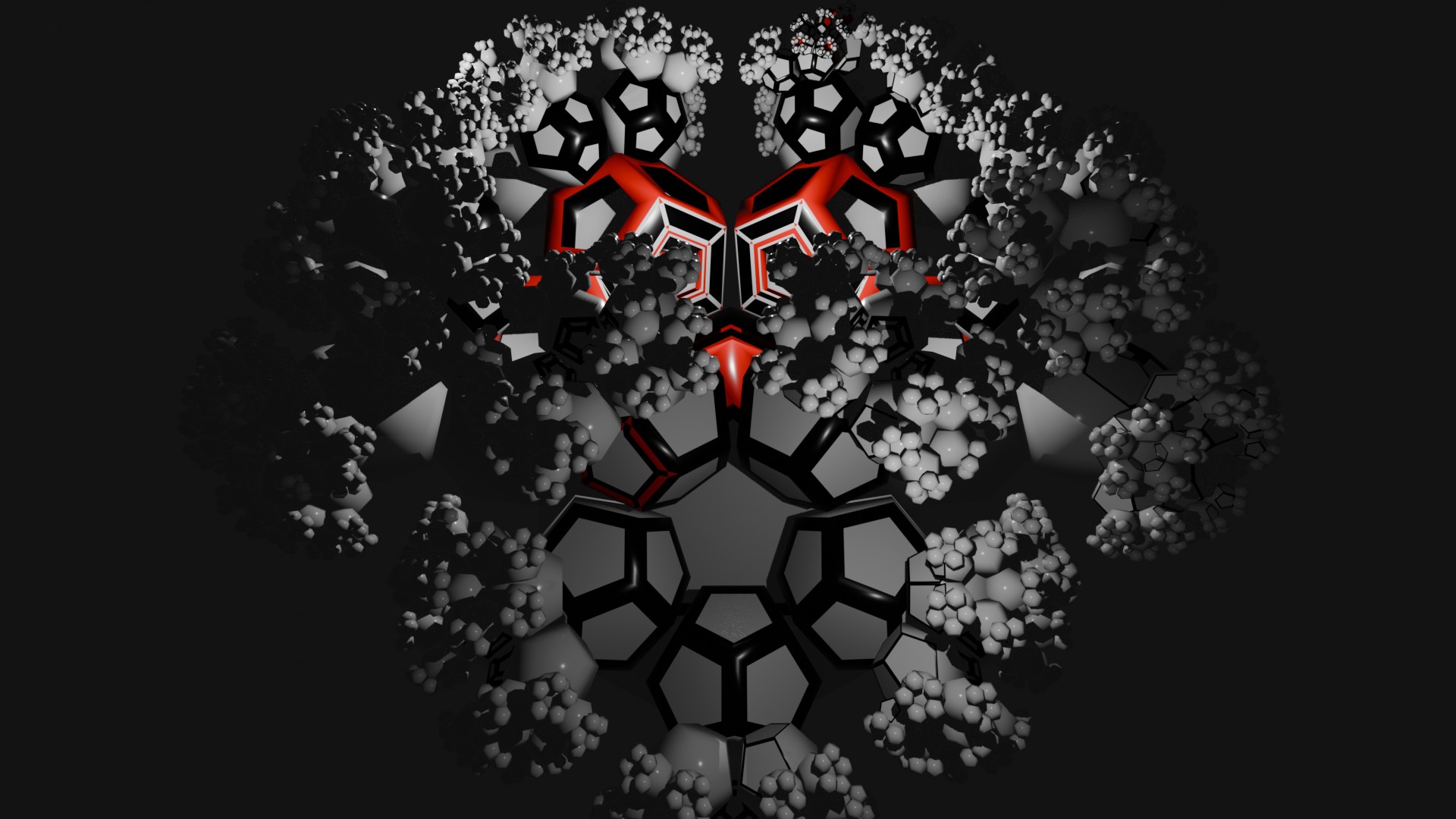}
  \caption{The artwork that started this article (rendered in Blender, March
  2026).  Left: looking down the axis of the crown face; right: the same tree
  from the side.  The red frames are the first three generations, remodelled by
  hand; everything else is the unedited output of the program described in
  Section~\ref{sec:construction}.}
  \label{fig:artwork}
\end{figure}

\section{Squares, cubes, and the missing dodecahedron}

Bosman's tree starts with a square.  On its top side sits a right triangle;
on each leg of the triangle sits a new square, and the construction repeats on
every new square.  If the triangle has angle $\alpha$, the two children are
scaled by $\cos\alpha$ and $\sin\alpha$, and the theorem of Pythagoras says
exactly that their areas add up to the area of the parent:
\begin{equation}\label{eq:pyth}
  \cos^2\alpha+\sin^2\alpha = 1 .
\end{equation}
The tree owes its name to this identity \cite{Lauwerier1991}.  It has been
generalised to free angles and non-square shapes
\cite{Beck2014,Munz2019,Ruta2024}.  In three dimensions the natural analogue
is built from cubes; the right triangle then becomes a tri-rectangular
tetrahedron and \eqref{eq:pyth} becomes de Gua's theorem on the squares of
its face areas \cite{ConantBeyer1974}.  Cube trees appear as 3-D prints and
even as model forests in turbulence simulations \cite{Schrottle2013}.

What does not seem to exist is a Pythagoras tree made of dodecahedra.  We
searched the scholarly indexes (arXiv full text, OpenAlex, Crossref, Semantic
Scholar), the Bridges archive, code platforms and the open web and found none;
every three-dimensional Pythagoras tree we located is built from cubes.  The
nearest relative is Whitehead's \emph{generalised Pythagorean lute}
\cite{Whitehead2022}, a string of dodecahedra scaled by the golden ratio, which
is a lute rather than a tree.  There is a good reason for the gap.  The joint
of a cube tree is a corner where three mutually perpendicular planes meet, and
any polygon with a corner at such a joint has a right angle there.  Regular
pentagons have $108^\circ$ corners, so no joint built from regular
dodecahedra can be right-angled, and the classical construction cannot be
copied literally.

A dodecahedral tree must therefore decide what to keep from Bosman.  The tree
in Figure~\ref{fig:artwork} keeps the recursion and the self-similarity and
replaces the right-angled joint by a twist.  This article is its
documentation: how it is built (Section~\ref{sec:construction}), how we
recovered it from the files that produced the images
(Section~\ref{sec:forensics}), why it does not collide with itself
(Section~\ref{sec:avoid}), which member of the family deserves to be called
Pythagorean (Section~\ref{sec:pythagorean}), and what happens when it is
allowed to grow from every face (Section~\ref{sec:allfaces}).

\section{The construction}\label{sec:construction}

\paragraph{The base.}  Let $\varphi=(1+\sqrt5)/2$.  The twenty points
\[
  (\pm\varphi^{-1},\pm\varphi^{-1},\pm\varphi^{-1}),\quad
  (0,\pm1,\pm\varphi^{-2}),\quad(\pm\varphi^{-2},0,\pm1),\quad(\pm1,\pm\varphi^{-2},0)
\]
are the vertices of a regular dodecahedron $B$ with edge $2\varphi^{-2}\approx
0.7639$, inradius $r_{\mathrm{in}}\approx0.8507$, circumradius
$\sqrt3/\varphi\approx1.0705$ and \emph{midradius}~$1$: every edge midpoint is
a unit vector.  We fix one face $F_0$, the \emph{crown}, with outward unit
normal $a_0=(0,1,\varphi)/\sqrt{2+\varphi}$, and list its edges cyclically
(counter-clockwise seen from outside) as $E_0,\dots,E_4$.  For each $k$ let
$m_k$ be the midpoint of $E_k$ (a unit vector) and $e_k$ the unit direction of
$E_k$ in the cyclic orientation.

\paragraph{The children.}  For a scale $g\in(0,1)$, an offset $d>1$ and a twist
angle $\theta$, define five similarities
\begin{equation}\label{eq:maps}
  f_k(x) = g\,\Rot(e_k,\theta)\,x + d\,m_k,\qquad k=0,\dots,4,
\end{equation}
where $\Rot(e,\theta)$ is the right-handed rotation by $\theta$ about the unit
vector $e$.  The child $f_k(B)$ is a copy of $B$ shrunk by $g$, turned by
$\theta$ about a line parallel to the edge $E_k$, and centred on the ray
through the midpoint of $E_k$ at distance $d$.  The artwork uses
\begin{equation}\label{eq:params}
  g = 0.36\cdot\tfrac{1+\sqrt2}{2}\approx 0.43456,\qquad
  d = 1.125\cdot\tfrac{1+\sqrt2}{2}\approx 1.35800,\qquad
  \theta = 45^\circ .
\end{equation}
(The factor $(1+\sqrt2)/2$ is called \texttt{gold} in the original program; the
golden ratio was probably intended, but these are the values that produced
Figure~\ref{fig:artwork}.)  The midpoint of $E_k$ lies at distance
$d-1\approx0.358$ from the child's centre, which is less than the child's
inradius $g\,r_{\mathrm{in}}\approx0.370$, so the parent's edge pokes into the
child.  This small overlap is the joint of the tree; it amounts to $0.16\%$ of
the child's volume.  Figure~\ref{fig:levels} shows the first generations.

\begin{figure}[t]
  \centering
  \includegraphics[width=0.245\linewidth]{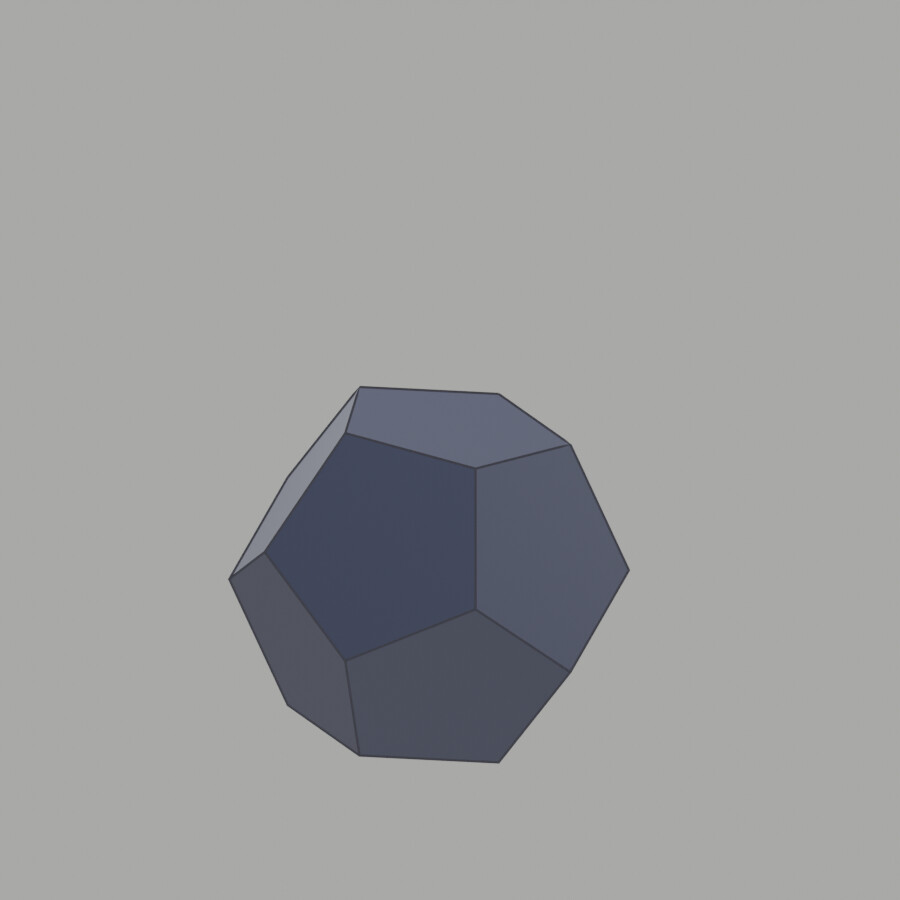}\hfill
  \includegraphics[width=0.245\linewidth]{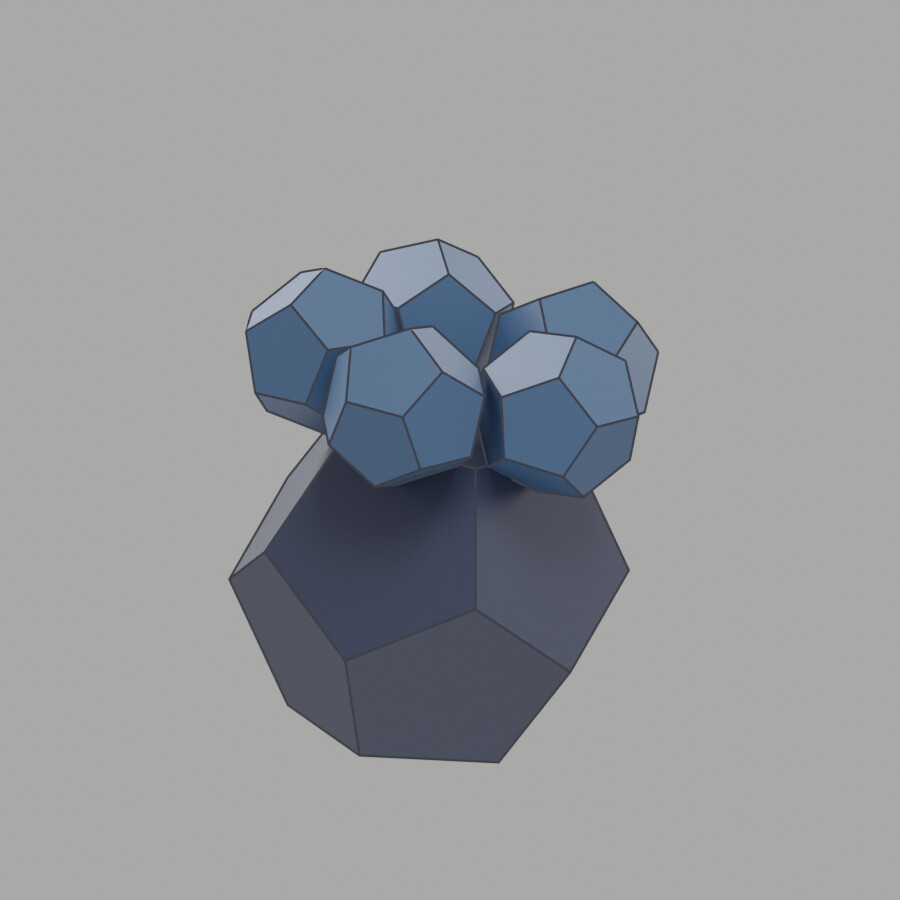}\hfill
  \includegraphics[width=0.245\linewidth]{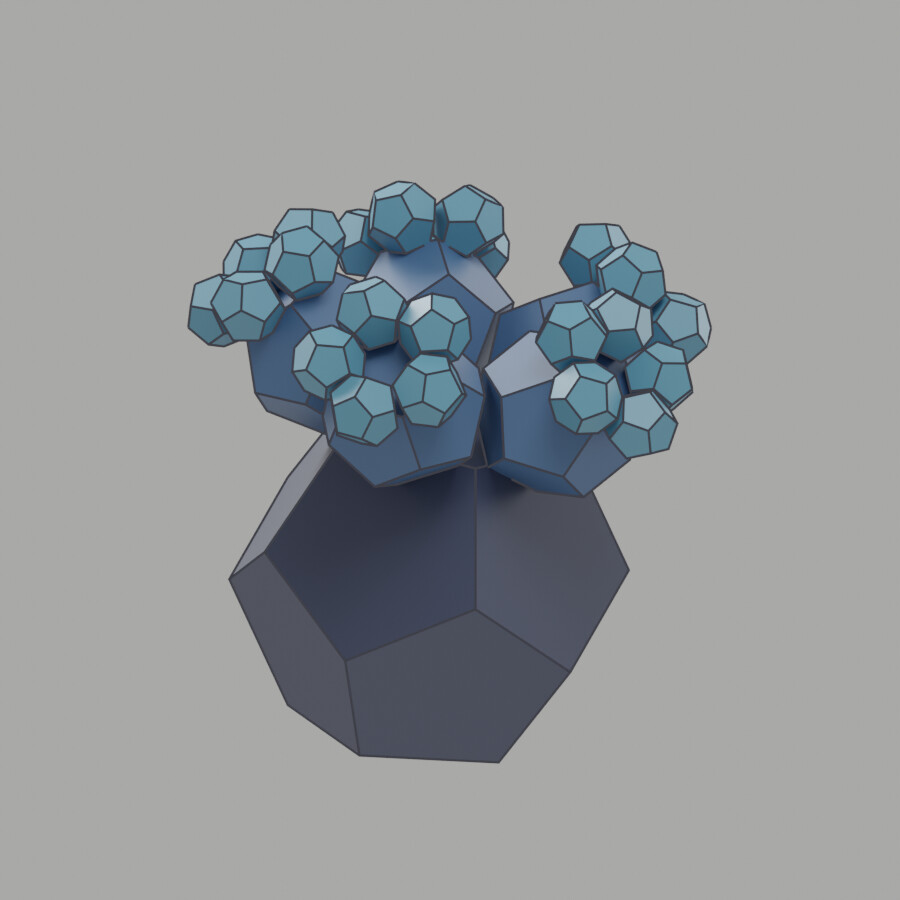}\hfill
  \includegraphics[width=0.245\linewidth]{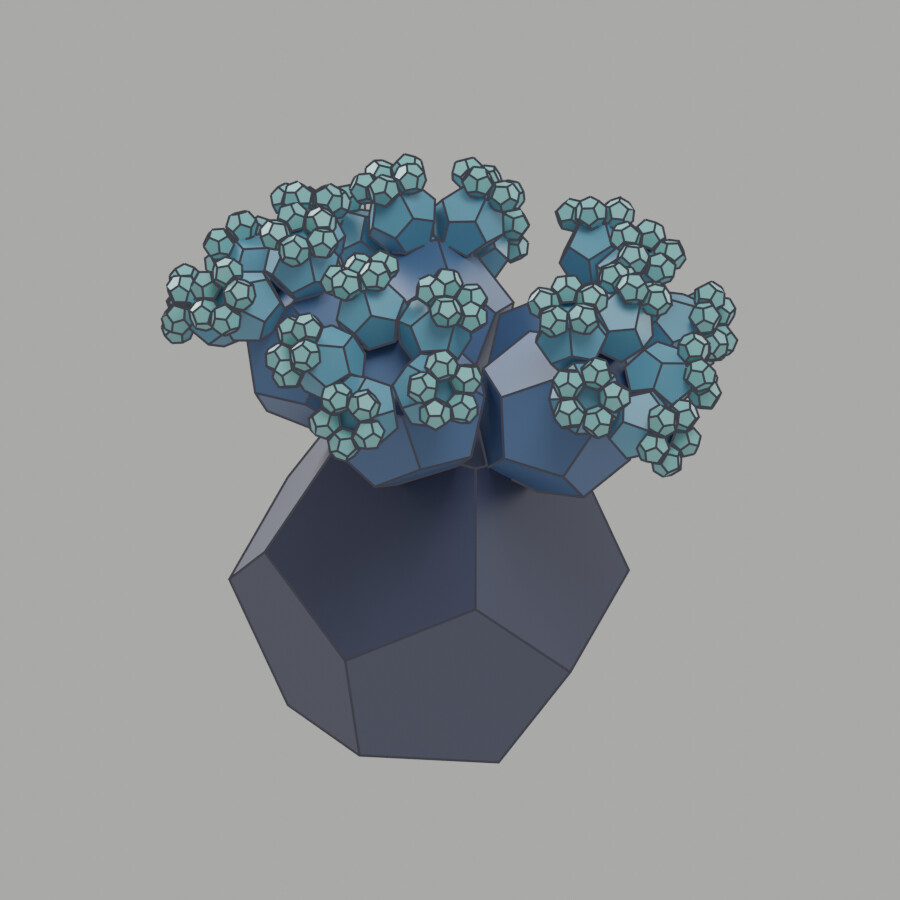}
  \caption{Generations $0$--$3$ of the tree with the parameters
  \eqref{eq:params}: $1$, $6$, $31$ and $156$ dodecahedra.  Each node sprouts
  five children on the edges of its own crown face.}
  \label{fig:levels}
\end{figure}

\paragraph{The tree as a fixed point.}  Let $\mathcal K$ be the space of
non-empty compact subsets of $\R^3$ with the Hausdorff metric.  The map
\[
  W(X) = B\ \cup\ f_0(X)\cup\dots\cup f_4(X)
\]
is a contraction of $\mathcal K$ with ratio $g$, because each $f_k$ is and the
fixed set $B$ does not increase Hausdorff distances.  By Banach's fixed point
theorem it has a unique fixed point, which we call the \emph{dodecahedral
Pythagoras tree}
\begin{equation}\label{eq:tree}
  T = B \cup f_0(T)\cup\dots\cup f_4(T).
\end{equation}
In Barnsley's terminology $T$ is the attractor of an iterated function system
\emph{with condensation} $B$ \cite{Barnsley1988,Hutchinson1981}.  Unwinding
\eqref{eq:tree}, $T$ is the closure of the union of the \emph{nodes}
$f_w(B)=f_{w_1}\circ\dots\circ f_{w_n}(B)$ over all finite words $w$ in the
alphabet $\{0,\dots,4\}$; the node $f_w(B)$ has depth $n$, scale $g^n$, and
children $f_{wk}(B)$ attached to the edges of its own crown $f_w(F_0)$.  The
depth-$n$ picture contains $(5^{n+1}-1)/4$ dodecahedra; Figure~\ref{fig:artwork}
shows depth $5$, that is, $3906$ of them.

\begin{remark}
The original program recurses on absolute coordinates, so the node it reaches
by choosing $k_1,\dots,k_n$ is $f_{k_n}\circ\dots\circ f_{k_1}(B)$, the composition
taken in the opposite order.  Both orders run over the same set of words, so
they draw the same picture, but only the order used above makes a node's
children sit on its own edges.
\end{remark}

\section{Recovering the artwork}\label{sec:forensics}

The images in Figure~\ref{fig:artwork} came with two C\texttt{++} programs that
write STL files, and the author no longer remembered which of the two had been
used.  We settled the question by checking each program against the actual
geometry.

Both programs place the children as in \eqref{eq:maps}, but they type the unit
vectors $m_k$ and $e_k$ with two decimals (for instance $(0.5, 0.31, 0.81)$ for
$(\tfrac12,\tfrac1{2\varphi},\tfrac\varphi2)$), build the twist by conjugating a
rotation about the $z$-axis with Euler rotations, and use a rotation-matrix
routine that takes degrees and whose $y$-rotation is the transpose of the usual
one.  We ported the programs to Python with every one of these quirks and
compiled the originals.  On a solid five-child build of depth $3$ ($156$
dodecahedra) the port and the compiled C\texttt{++} agree to $5\times10^{-7}$,
which is the print precision of the STL file.  One quirk turned out to matter
geometrically.  The program writes $-45^\circ$ for children $3$ and $4$, but
their axes are typed with the opposite orientation, so all five twists agree:
each is $+45^\circ$ about the cyclically oriented edge, as in~\eqref{eq:maps}.

The Blender scene behind both images holds a single mesh of $148{,}164$
polygons, $141{,}127$ of them triangles.  Splitting the triangles into
connected components gives $7{,}909$ pieces, of which $3{,}794$ are clean
regular dodecahedra (the rest are fragments and the polygons of the hand-built
frames).  Of these $3{,}794$, $2{,}435$ coincide with nodes of the depth-$5$
tree \eqref{eq:tree}--\eqref{eq:params} to within $0.5\%$ of their size.  The
other $1{,}359$ are still exactly regular, but have been moved by at most half
their own size (median $5\%$) and turned by at most $1.1^\circ$, which we read
as hand edits in Blender.  Generations $0$--$2$ were remodelled into the red
frames.  The second program, \texttt{DedocaedroA-copia.cpp}, passes radians
where degrees are expected, so its twists are only $0.785^\circ$, and it
matches none of the components.  The artwork is therefore
\texttt{DedocaedroTREEverCELLS.cpp} with all five children switched on, depth
$5$, and faces drawn solid.  (As it stands, that file enables only children $0$
and $1$ and draws every edge as a thin strut, the ``cells'' of its name.)
Replacing the typed two-decimal vectors by exact ones changes the maps by
$0.4\%$ and moves no vertex of the depth-$5$ tree by more than $0.007$; from now
on we use the exact version.

\begin{figure}[t]
  \centering
  \includegraphics[width=0.495\linewidth]{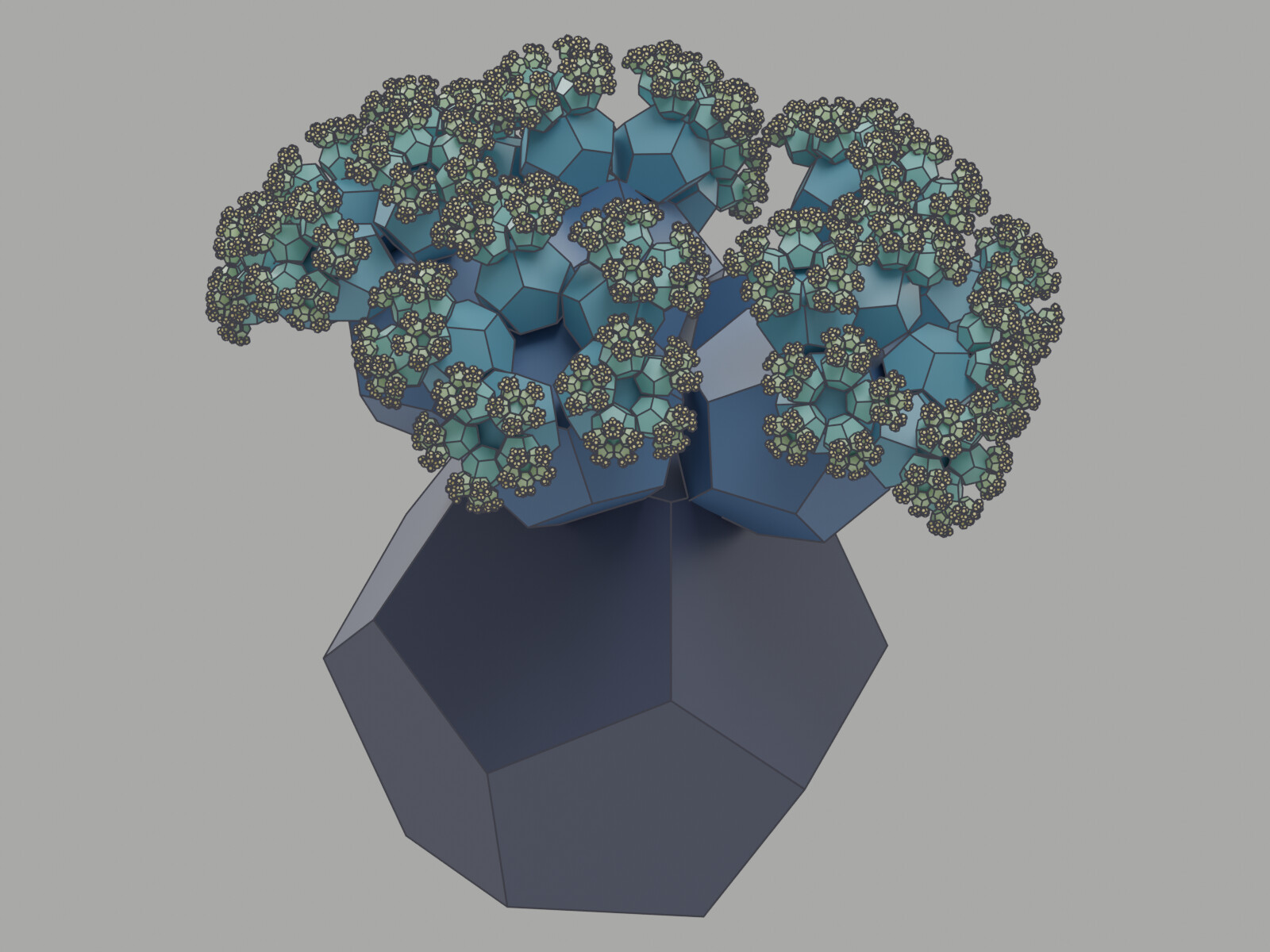}\hfill
  \includegraphics[width=0.495\linewidth]{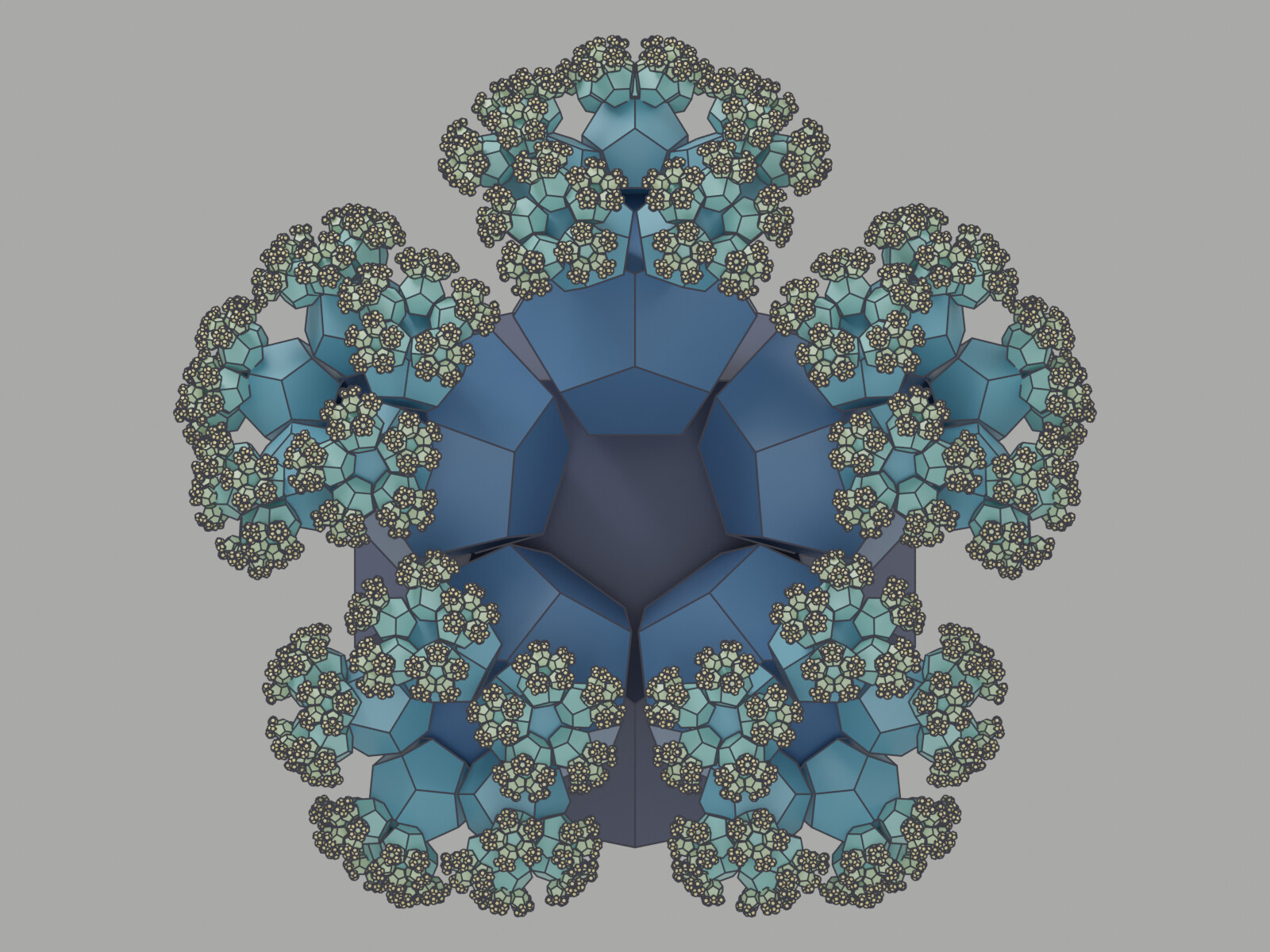}
  \caption{The artwork's tree regenerated from \eqref{eq:maps}--\eqref{eq:params},
  depth $5$, coloured by generation.}
  \label{fig:repro}
\end{figure}

\section{Symmetry and size}

The rotations of $B$ form the icosahedral group $I$ of order $60$; with the
reflections it becomes the full group $I_h$ of order $120$.  The tree keeps
only the part of it that fixes the crown.

\begin{proposition}\label{prop:sym}
For every $g,d,\theta$ the tree $T$ is invariant under the dihedral group
$D_5$ of order $10$ that fixes $F_0$: the rotations by multiples of $72^\circ$
about $a_0$, and the five reflections in planes through $a_0$ and an edge
midpoint $m_k$.  For $\theta=45^\circ$ no other rotation of $I$ preserves the
family $\{f_k\}$.
\end{proposition}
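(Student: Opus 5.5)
The plan is to use the uniqueness of the fixed point of $W$. Suppose $\sigma$ is an isometry fixing the origin with $\sigma(B)=B$, and suppose that for every $k$ there is a $\pi(k)$ with $\sigma f_k\sigma^{-1}=f_{\pi(k)}$, where $\pi$ is a permutation of $\{0,\dots,4\}$. Then
\[
  W(\sigma X)=B\cup\bigcup_k f_k(\sigma X)=\sigma\Bigl(B\cup\bigcup_k \sigma^{-1}f_k\sigma(X)\Bigr)=\sigma W(X),
\]
so $\sigma T$ is also a fixed point, and hence $\sigma T=T$. The invariance claim therefore reduces to checking this conjugation identity for the ten elements of $D_5$.

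\textbf{Conjugation by rotations.} Let $\rho$ be the rotation by $72^\circ$ about $a_0$. It maps $B$ to $B$ and $F_0$ to $F_0$, and it permutes the edges cyclically while preserving their orientation. Hence $\rho m_k=m_{k+1}$ and $\rho e_k=e_{k+1}$, indices taken mod $5$. For any rotation $\rho$ we have $\rho\Rot(e,\theta)\rho^{-1}=\Rot(\rho e,\theta)$. Therefore
\[
  \rho f_k\rho^{-1}(x)=g\Rot(e_{k+1},\theta)x+d\,m_{k+1}=f_{k+1}(x).
\]

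\textbf{Conjugation by reflections.} Let $s$ be the reflection in the plane spanned by $a_0$ and $m_j$. It fixes $m_j$ and swaps $m_{j+i}$ with $m_{j-i}$. Since a reflection reverses the cyclic order of the edges, $s e_k=-e_{k'}$, where $k'$ is the image index. An improper orthogonal map satisfies $s\Rot(e,\theta)s^{-1}=\Rot(-se,\theta)$: it reverses the handedness, so the sign flips. This gives $s\Rot(e_k,\theta)s=\Rot(e_{k'},\theta)$, and hence $sf_ks=f_{k'}$. This holds for every $g$, $d$ and $\theta$, because $\theta$ enters only through $\Rot$. Keeping track of the sign in this step is the one place where the argument needs care.

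\textbf{No other rotation works.} Let $\sigma\in I$ preserve the family, meaning that $\{\sigma f_k\sigma^{-1}\}=\{f_k\}$. The translation parts force $\sigma$ to permute $\{m_0,\dots,m_4\}$. Since $a_0$ is proportional to $\sum_k m_k$, $\sigma$ fixes $a_0$, and so $\sigma$ lies in the stabiliser of $F_0$ in $I$. That stabiliser is the cyclic group of order $5$, since the rotation stabiliser of a face of $B$ is $C_5$.

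For a more careful version of the claim, consider also orientation-reversing elements of $I_h$ in the same stabiliser, namely $D_5$, which has already been handled. One can also consider isometries of $I_h$ that send $F_0$ to the opposite face $-F_0$. The translation parts $d\,m_k$ rule these out, because $-m_k$ is not among the $m_j$. The linear parts $g\Rot(e_k,45^\circ)$ then distinguish the maps: for an edge $E_j$, $\Rot(e,\theta)=\Rot(e',\theta')$ holds only when $(e,\theta)=(e',\theta')$ or $(e,\theta)=(-e',-\theta')$. Orientation-preserving conjugation by $\sigma$ must therefore send $e_k$ to $e_{\pi(k)}$ exactly, and with $\theta=45^\circ\neq 0^\circ,180^\circ$ the sign is pinned down, so no reversed orientation can occur. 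Combining this with the stabiliser computation yields exactly the five rotations of $D_5$.

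The main obstacle is stating exactly what "preserves the family" means. Under the most natural reading, the permutation of the midpoints already confines $\sigma$ to $C_5$, whatever the value of $\theta$. The role of $\theta=45^\circ$ is then only to exclude degenerate coincidences such as $\theta=180^\circ$, where $\Rot(e,\theta)=\Rot(-e,\theta)$ and extra reflections or half-turns could preserve the maps.
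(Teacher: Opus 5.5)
Your proof of the invariance part is the same as the paper's. You use uniqueness of the fixed point of $W$ and the identity $\rho f_k\rho^{-1}=f_{k+1}$ for the $72^\circ$ rotation. For a reflection, the reversal of the edge orientation and the reversal of the rotation sense cancel, which you write as $s\Rot(e,\theta)s^{-1}=\Rot(-se,\theta)$.

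You differ from the paper on the last claim. The paper settles it by a direct computer check of the $60$ elements of $I$ at $\theta=45^\circ$. You argue by hand instead:
\begin{itemize}
\item every element of $I$ is linear, so the translation part of $\sigma f_k\sigma^{-1}$ is $d\,\sigma m_k$;
\item hence $\sigma$ must permute $\{m_0,\dots,m_4\}$;
\item hence $\sigma$ fixes $\sum_k m_k=5r_{\mathrm{in}}a_0$;
\item hence $\sigma$ lies in the rotation stabiliser of $F_0$, which is $C_5$.
\end{itemize}
This is correct and needs no computer. It also proves more: the conclusion holds for every $g$, every $d\neq0$ and every $\theta$. It even holds in $I_h$, with the stabiliser $D_5$ in place of $C_5$. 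So the hypothesis $\theta=45^\circ$ in the statement is superfluous, and your argument explains why. The paper's check certifies only the one parameter value it was run at.

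Drop two remarks from your last paragraphs. First, the discussion of the linear parts ``pinning down the sign'' adds nothing once the midpoint argument has confined $\sigma$ to $C_5$. Second, your speculation that at $\theta=180^\circ$ extra reflections or half-turns could preserve the maps contradicts your own argument: no element of $I_h$ outside $D_5$ permutes the midpoints, whatever $\theta$ is.
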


\begin{proof}
If $\sigma$ is an isometry with $\sigma(B)=B$ and $\{\sigma f_k\sigma^{-1}\}=\{f_k\}$,
then $\sigma(T)$ satisfies \eqref{eq:tree}, so $\sigma(T)=T$ by uniqueness.  A
rotation $\rho$ by $72^\circ$ about $a_0$ sends $(m_k,e_k)$ to
$(m_{k+1},e_{k+1})$, so $\rho f_k\rho^{-1}=f_{k+1}$.  A reflection $\mu$
fixing $F_0$ reverses the cyclic orientation, so it sends $e_k$ to $-e_{j}$
for the image edge $E_j$, and conjugation by a reflection reverses the sense
of a rotation; the two reversals cancel:
$\mu\Rot(e_k,\theta)\mu^{-1}=\Rot(-e_j,-\theta)=\Rot(e_j,\theta)$.  The last
claim we checked directly on the $60$ elements of $I$.
\end{proof}

Generation $n$ consists of $5^n$ dodecahedra of scale $g^n$.  Counting
volume and surface with multiplicity, the whole tree has
\[
  \frac{\vol(B)}{1-5g^3}\approx1.696\,\vol(B),\qquad
  \frac{\operatorname{area}(\partial B)}{1-5g^2}\approx 17.9\,\operatorname{area}(\partial B)
\]
for the artwork, and the surface series converges only because
$5g^2\approx0.944<1$.  The points that are limits of ever smaller nodes form
the \emph{crown set} $K$, the attractor of $\{f_0,\dots,f_4\}$ alone.  Its
similarity dimension $s$ solves Moran's equation $5g^s=1$, so
$s=\log5/\log(1/g)\approx1.931$.  This equals the Hausdorff dimension of $K$
as soon as the five pieces $f_k(K)$ are pairwise disjoint
\cite{Moran1946,Hutchinson1981,Falconer2014}, and the certificate behind
Theorem~\ref{thm:avoid} below keeps them a positive distance apart.

\section{The tree does not collide with itself}\label{sec:avoid}

Children overlap their parents by design.  We call any other pair of nodes
whose interiors meet a \emph{clash}.  A tree is \emph{self-avoiding} if it has
no clashes at any depth.

\begin{theorem}\label{thm:avoid}
With the parameters \eqref{eq:params}, the dodecahedral Pythagoras tree is
self-avoiding.  Two of its dodecahedra have intersecting interiors if and only
if one is the parent of the other.  The same holds for the two-decimal version
in the original program, and for its two-child variant.
\end{theorem}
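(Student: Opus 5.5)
The plan is to reduce the infinitely many pairs of nodes to finitely many checks using self-similarity, and then certify those checks with interval arithmetic.

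\emph{Reduction to a common ancestor.} Take two nodes $f_u(B)$ and $f_v(B)$, neither of which is the parent of the other. If they are distinct, write $u=wu'$ and $v=wv'$ with $w$ their longest common prefix. Since $f_w$ is a similarity, the pair clashes if and only if $f_{u'}(B)$ and $f_{v'}(B)$ clash. We may therefore assume that $u$ and $v$ have no common first letter. There are two cases.

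\emph{Case 1: $u$ is empty.} Then we must show that $B$ meets no node of depth $\ge2$ in its interior.

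\emph{Case 2: $u=iu''$ and $v=jv''$ with $i\ne j$.} Then we must show that the subtrees $f_i(T)$ and $f_j(T)$ have disjoint interiors. This includes the root $f_i(B)$ of one subtree against deep nodes of the other.

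Proposition~\ref{prop:sym} reduces Case~2 to the two pairs $(i,j)=(0,1)$ and $(0,2)$, up to the $D_5$ action. For the two-child variant there is just the pair $(0,1)$, and there is no symmetry to use.

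\emph{Bounding regions.} Let $\rho$ be the radius of a ball $\mathcal B=\bar B(c,\rho)$ containing $T$. Such a ball exists if $f_k(\mathcal B)\subset\mathcal B$ for all $k$ and $B\subset\mathcal B$. A concrete choice is to center $c$ on the axis $a_0$ and choose $\rho$ so that $|f_k(c)-c|+g\rho\le\rho$ and $\rho\ge|c|+\sqrt3/\varphi$.

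In practice a single ball is too crude, since the subtrees sit close together near the joint. I would instead use a convex polytope $P\supset B$, for example the convex hull of a finite-depth tree inflated slightly, with the invariance $f_k(P)\subset P$ replaced by a containment verified by linear programming. Alternatively, I would use the finite-depth hull together with an enclosing ball for the tail, $T\subset \bigcup_{|w|=N} f_w(\mathcal B)\cup\bigcup_{|w|<N}f_w(B)$.

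The certification is a branch-and-bound recursion. To show that $f_u(T)$ and $f_v(T)$ are interior-disjoint:
\begin{enumerate}
\item First test whether the enclosing regions $f_u(\mathcal B)$ and $f_v(\mathcal B)$ are disjoint; if they are, that branch is done.
\item Otherwise, check the two root dodecahedra $f_u(B)$ and $f_v(B)$ against each other exactly. Two convex polytopes have disjoint interiors if and only if a separating plane exists, and such a plane among face normals and edge cross products can be found and verified with rational or interval arithmetic.
\item Then recurse on the pairs $(u,vk)$, $(uk,v)$ and $(uk,vk')$.
\end{enumerate}
Case~1 is handled the same way: $B$ against $f_i f_k(T)$, where the parent–child overlap is excluded by construction.

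The recursion terminates once $g^n\rho$ is smaller than the positive separation between the enclosures. Positivity is exactly the statement that the crown pieces $f_k(K)$ are a positive distance apart, which the paper also uses for the dimension claim.

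\emph{Main obstacle: the joint.} Near each edge $E_k$, the child $f_k(B)$ and its own children come close to the neighbouring child $f_{k\pm1}(B)$ and to the parent's adjacent faces. Crude ball enclosures overlap there at every depth, so the recursion would not terminate. I would first try sharper enclosures of the subtree $T$ by polytopes $P$ with $f_k(P)\subset P$ and $B\subset P$, obtained by iterating the hull of a depth-3 tree. Failing that, I would use a two-level enclosure (node plus child balls) and check that the recursion stays bounded.

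The same certificate, rerun with the two-decimal maps (and with only children $0,1$ for the two-child variant), gives the remaining claims. All comparisons are done in interval arithmetic, so the argument is rigorous.
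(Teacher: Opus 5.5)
Your overall architecture matches the paper's. The common-prefix split is exactly the case analysis of Lemma~\ref{lem:reduction}. The enclosure you fall back on is what the paper certifies: an invariant convex polytope grown from a finite-depth hull, combined with exact separation tests on pairs of convex pieces. The flaw is in the branch-and-bound you build on top of it.

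\textbf{The recursion does not terminate as written.} Step~3 recurses on $(u,vk)$ and $(uk,v)$ as pairs of \emph{subtrees}. So along the chain $(u,vk_1),(u,vk_1k_2),\dots$ the enclosure $f_u(\mathcal B)$ never shrinks, while $f_{vk_1\cdots k_n}(\mathcal B)$ contracts to a crown point of $f_v(T)$. Such a chain stops only if $f_u(\mathcal B)\cap f_v(K)=\emptyset$. You already expect that to fail near the joint for balls, and nothing guarantees it for an invariant polytope either. Your stopping rule (``$g^n\rho$ below the separation'') silently assumes that both words grow.

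\textbf{The positivity you invoke is the wrong one.} Termination needs a positive distance between $f_i(T)$ and $f_j(T)$, dodecahedra included, and between $B$ and $f_if_k(T)$. That is stronger than separation of the crown pieces $f_k(K)$. Moreover, in the paper the crown separation is a consequence of this certificate, not an input to it.

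\textbf{How to repair it.} Compare the exact root node $f_u(B)$, not $f_u(T)$, against $f_{vk}(T)$, and symmetrically. Simpler still is the paper's route, which needs no recursion. With $Q$ invariant, one level of refinement, $P=B\cup f_0(Q)\cup\dots\cup f_4(Q)$ on both sides, already separates. Conditions (i) and (ii) then become finitely many convex pairs checked once, with margins $7.3\times10^{-3}$ and $0.30$.

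\textbf{Two smaller points.}
\begin{enumerate}
\item The $D_5$ reduction of Case~2 to the pairs $(0,1)$ and $(0,2)$ is valid for the exact maps, but not for the two-decimal version. The rounded vectors are not exact images of one another under the $72^\circ$ rotation about $a_0$; for instance, $(0.5,0.31,0.81)$ is not even a unit vector. So Proposition~\ref{prop:sym} does not apply there. All ten pairs must be certified, as the paper's condition (i) over all $j\ne k$ does.
\item You prove only the ``only if'' half of the statement. The ``if'' half, that every parent--child pair really does overlap, is the observation in Section~\ref{sec:construction} that $d-1<g\,r_{\mathrm{in}}$. Hence the parent's edge midpoint lies in the interior of the child.
\end{enumerate}
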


The theorem covers every depth, while a computer can only look at finitely many
nodes.  Self-similarity bridges the gap.

\begin{lemma}\label{lem:reduction}
Let $Q$ be a convex polytope with $B\subseteq Q$ and $f_k(Q)\subseteq Q$ for all
$k$, and put $P=B\cup f_0(Q)\cup\dots\cup f_4(Q)$.  Suppose that
\begin{enumerate}
\item[(i)] for $j\ne k$, every convex piece of $f_j(P)$ has interior disjoint
from every convex piece of $f_k(P)$; and
\item[(ii)] $\inter B\cap f_kf_j(Q)=\emptyset$ for all $j,k$.
\end{enumerate}
Then $T$ is self-avoiding.
\end{lemma}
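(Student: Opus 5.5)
The plan is to show that any two nodes $f_u(B)$, $f_v(B)$ with intersecting interiors must be parent and child. Write $u=wu'$, $v=wv'$, where $w$ is their longest common prefix. Since $f_w$ is a similarity, it preserves intersection of interiors, so we may strip it off and assume $w$ is empty. Two cases remain.

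\textbf{Case 1: the words first differ.} Here $u=ju''$ and $v=kv''$ with $j\ne k$. First we show that every node $f_x(B)$ lies in $P$. For $x$ empty this is $B\subseteq P$. For $x=kx''$ we have $f_x(B)=f_k(f_{x''}(B))$, and by induction on $x''$ together with $B\subseteq Q$ and $f_k(Q)\subseteq Q$,
\[
f_{x''}(B)\subseteq Q ,\qquad\text{hence}\qquad f_x(B)\subseteq f_k(Q)\subseteq P .
\]
It follows that $f_u(B)\subseteq f_j(P)$ and $f_v(B)\subseteq f_k(P)$. Now $f_j(P)$ is a finite union of convex pieces, namely $f_j(B)$ and the $f_jf_i(Q)$. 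By (i), its union of interiors of pieces is disjoint from that of $f_k(P)$.

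The point needing care is that an open set contained in a finite union of convex closed pieces need not lie in the union of their interiors. I would handle it as follows. Let $U=\inter f_u(B)\cap\inter f_v(B)$, and suppose $U\neq\emptyset$. The pieces of $f_j(P)$ cover $U$, and so do the pieces of $f_k(P)$. The boundaries of all these polytopes are finitely many nowhere dense sets, so by Baire there is an open $V\subseteq U$ avoiding all of them. Shrinking $V$, it lies inside one piece of $f_j(P)$ and one piece of $f_k(P)$: each piece is closed, and the complement of a closed set is open. Since $V$ avoids their boundaries, $V$ lies in both interiors, contradicting (i).

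\textbf{Case 2: one word is a prefix of the other.} Here $u$ is empty and $v=kx$ is nonempty. If $x$ is empty, $v$ is a child of $u$, which is the allowed parent pair. Otherwise $x=jy$, and
\[
f_v(B)=f_kf_j(f_y(B))\subseteq f_kf_j(Q)
\]
by the induction above. Condition (ii) then gives $\inter B\cap f_v(B)=\emptyset$, so there is no clash.

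Combining the cases, interiors meet only for parent/child pairs, so $T$ is self-avoiding. I would also remark that the closure in the definition of $T$ adds no new nodes, so ``self-avoiding'' concerns nodes only.

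The main obstacle is the interior bookkeeping in Case 1, that is, passing from the piecewise hypothesis (i) to disjointness of the interiors of the unions. The Baire/boundary argument above resolves it. The rest is the straightforward induction $f_x(B)\subseteq Q$.
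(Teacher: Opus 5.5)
Your proof is correct and follows the paper's own argument: strip the longest common prefix, use (ii) when one word extends the other by at least two letters, and use (i) when the remaining words begin with different letters. The differences are minor and only add rigour: you place the nodes in $Q$ and $P$ by direct induction on words rather than via $T\subseteq Q$ and $T=W(T)\subseteq P$, and you spell out the step from piecewise disjointness in (i) to disjoint node interiors, which the paper leaves implicit (a finite union of closed nowhere dense boundaries already has empty interior, so Baire is not needed).
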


\begin{proof}
Since $W$ maps subsets of $Q$ into $Q$, the fixed point satisfies $T\subseteq Q$,
and then $T=W(T)\subseteq P$.  Take distinct nodes $f_u(B)$ and $f_v(B)$ and
strip their longest common prefix $w$; since $f_w$ is a similarity, this does
not change whether their interiors meet.  If one remaining word is empty and
the other has length $1$, the nodes are parent and child.  If one is empty and
the other is $kj\cdots$, the second node lies in $f_kf_j(T)\subseteq f_kf_j(Q)$
and (ii) separates it from $B$.  Otherwise the words begin with different
letters $j\neq k$, the nodes lie in $f_j(T)\subseteq f_j(P)$ and
$f_k(T)\subseteq f_k(P)$, and (i) separates them.
\end{proof}

\begin{proof}[Proof of Theorem~\ref{thm:avoid} (computer-assisted)]
We fix about $1{,}000$ unit directions $u_i$ (the facet normals of the convex
hull of the depth-$4$ tree, plus $400$ random directions) and look for a
polytope $Q=\{x: u_i\cdot x\le h_i\}$ that contains $B$ and its own images.
Starting from the hull of the depth-$4$ tree we iterate
$h_i\leftarrow\max\bigl(h_B(u_i),\max_k\,[\,u_i\cdot t_k+g\,h_Q(R_k^{\top}u_i)\,]\bigr)$,
where $f_k(x)=gR_kx+t_k$ and $h_Q$ is the support function of the current
$Q$, evaluated on its vertices.  The iteration settles to machine precision in
a few dozen steps.  We then enlarge every $h_i$ by $10^{-9}$ and verify
$B\subseteq Q$ and $f_k(Q)\subseteq Q$ directly on the vertices; the resulting
$Q$ has $1380$ vertices.  Conditions (i) and (ii) are disjointness statements
about pairs of convex polytopes.  Each is decided by a linear program that
finds the largest ball contained in both polytopes, where a negative radius
$-r$ means that the two polytopes can each be pushed outwards by $r$ before
they touch.  A negative optimum therefore certifies not only disjoint
interiors but a positive distance.  For \eqref{eq:params} the smallest margin
in (i) is $7.3\times10^{-3}$ and in (ii) it is $0.30$, both far above
floating-point error (the tree has diameter about $3.3$).  For the two-decimal
constants the margins are $5.8\times10^{-3}$ and $0.30$, and for the two-child
variant $1.1\times10^{-2}$ and $0.31$.  An independent check agrees: an exact
separating-axis test on all pairs of the depth-$5$ tree (after a
bounding-sphere filter) finds exactly the $3905$ parent--child joints and
nothing else.
\end{proof}

The twist, and its sense, are essential.  With $\theta=0^\circ$ or
$\theta=90^\circ$ (and the same $g,d$), adjacent children already overlap at
depth $1$, and by depth~$5$ there are $7615$ and $3905$ clashes respectively.
Twisting the other way, $\theta=-45^\circ$, gives $6740$ clashes by depth~$4$.
The first generation cannot tell $\theta$ from $\theta+180^\circ$, because an
edge direction is a two-fold axis of $B$, but the later generations can.
Figure~\ref{fig:phase} (top) maps the parameter plane: each point is checked to
depth~$4$ and, when clean, put through the certificate.  For scales
$g\ge0.2$, the range we scanned, self-avoiding crown trees exist only for
twists between roughly $-85^\circ$ and $140^\circ$; the largest certified
scale, $g=0.4625$, occurs near $\theta=65^\circ$.  The artwork sits close to
the edge of the safe region, which at $\theta=45^\circ$ ends at a critical
scale $g_c\in[\,\GCLO,\,\GCHI\,]$: the scale $\GCLO$ is certified, and at
$\GCHI$ a clash appears.  The artwork's $g=0.4346$ is certified as well.  The
upper value is $1/\sqrt5$, which will become important in the next section.

\begin{figure}[t]
  \centering
  \includegraphics[width=\linewidth]{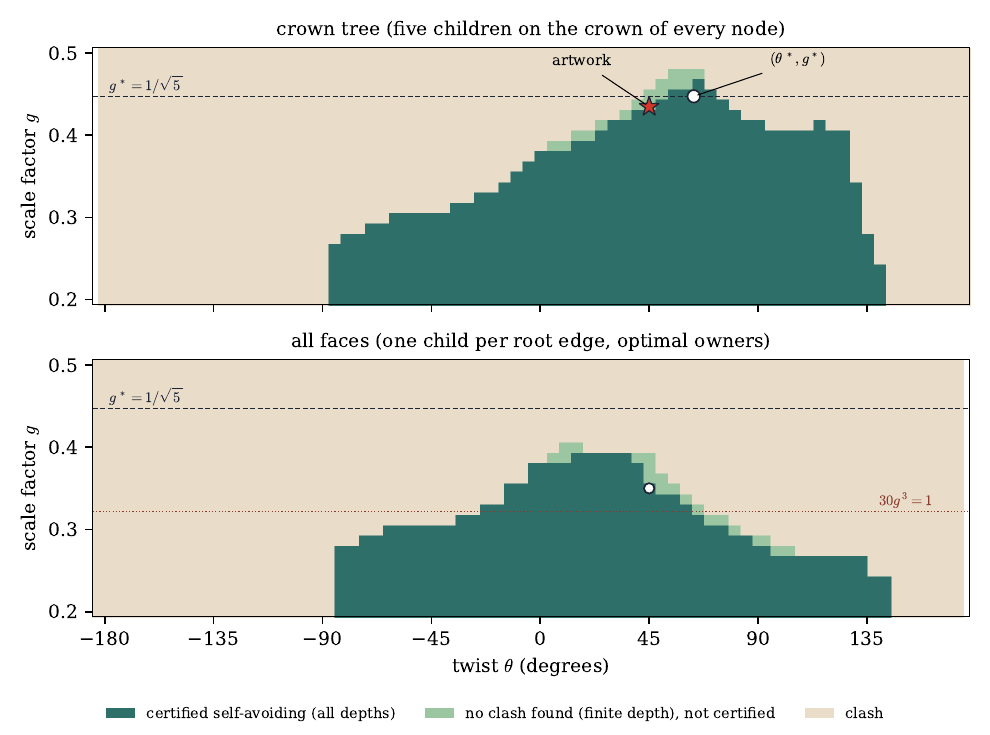}
  \caption{Where the tree is self-avoiding.  Twist $\theta$ against scale $g$,
  with offset $d=1+\kappa g$, $\kappa=(d_{\mathrm{art}}-1)/g_{\mathrm{art}}\approx0.824$, so that every
  child keeps the artwork's joint.  Top: the crown tree.  Bottom: the best
  all-faces tree of Section~\ref{sec:allfaces}.  The dotted line in the bottom panel
  is the volume bound of Proposition~\ref{prop:volume} for trees that sprout on
  all thirty edges at every node.  Twists of opposite sense give genuinely
  different trees, so the whole circle of twists is shown.}
  \label{fig:phase}
\end{figure}

\section{The Pythagorean dodecahedral tree}\label{sec:pythagorean}

Which choice of $g$ deserves the name Pythagoras?  In Bosman's tree the scale
of each child is the cosine of the angle between its growth direction and the
parent's, and \eqref{eq:pyth} says that these squared cosines sum to $1$.  The
dodecahedron has its own version of \eqref{eq:pyth}.  It is a classical fact
about the six face axes, which form an equiangular tight frame
\cite{LemmensSeidel1973,Waldron2018}; we include the short proof because the
tree will use it.

\begin{proposition}\label{prop:frame}
Let $a_1,\dots,a_6$ be unit vectors along the six axes through opposite face
centres of a regular dodecahedron.  Then for every $u\in\R^3$
\begin{equation}\label{eq:frame}
  \sum_{i=1}^{6}(u\cdot a_i)^2 = 2\,|u|^2 .
\end{equation}
In particular, if $F_0$ has normal $a_0$ and $a_1,\dots,a_5$ are the normals of
the five faces adjacent to it, then $\sum_{k=1}^5(a_0\cdot a_k)^2=1$ and each
term equals $1/5$.
\end{proposition}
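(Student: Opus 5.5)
The plan is to use the symmetry of the dodecahedron. The quadratic form $S(u)=\sum_{i=1}^6(u\cdot a_i)^2$ corresponds to the symmetric matrix $M=\sum_i a_ia_i^{\top}$. The first step is to check that $M$ does not depend on the choice of signs of the $a_i$, since each term $a_ia_i^{\top}$ is unchanged under $a_i\mapsto -a_i$. A rotation $\rho$ of the icosahedral group $I$ permutes the six face axes, possibly reversing some of them. Hence $\rho M\rho^{\top}=M$, so $M$ commutes with every element of $I$.

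The second step is to conclude that $M$ is a scalar matrix. The group $I$ acts irreducibly on $\R^3$: an invariant line would be fixed by rotations about non-parallel axes, for instance the $5$-fold axes through different face centres, which is impossible. An invariant plane would have an invariant orthogonal line. By Schur's lemma, or directly because each eigenspace of the symmetric matrix $M$ is $I$-invariant, we get $M=c\,I_3$. Taking traces gives $3c=\sum_i|a_i|^2=6$, so $c=2$, which is \eqref{eq:frame}.

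For the second claim, take $u=a_0$, choose $a_0$ as one of the six axes, and orient $a_1,\dots,a_5$ along the normals of the five faces adjacent to $F_0$. These five neighbours lie on five distinct axes, and none of them is the axis of $F_0$ or of its opposite face. So $a_0,a_1,\dots,a_5$ represent all six axes. Then \eqref{eq:frame} gives $1+\sum_{k=1}^5(a_0\cdot a_k)^2=2$, hence $\sum_{k=1}^5(a_0\cdot a_k)^2=1$. The rotation by $72^\circ$ about $a_0$, already used in Proposition~\ref{prop:sym}, cycles $a_1,\dots,a_5$ and fixes $a_0$. Therefore all five terms $(a_0\cdot a_k)^2$ are equal, and each is $1/5$. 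As a consistency check, this gives $a_0\cdot a_k=1/\sqrt5$, the cosine of the known supplement of the dihedral angle.

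The main point needing care is the bookkeeping that the five adjacent faces meet each of the remaining five axes exactly once. Adjacent faces are never opposite, and the opposite face of $F_0$ is not adjacent to $F_0$, so the five neighbours use five distinct axes other than the axis of $F_0$, that is, all of them. The irreducibility step is routine. If one prefers a direct check, one can instead compute $M$ from the explicit face normals, which are the cyclic permutations of $(0,\pm1,\varphi)$ normalized. This gives a diagonal matrix with entries $2(1+\varphi^2)/(2+\varphi)=2$, since $\varphi^2=\varphi+1$, and the off-diagonal terms cancel in $\pm$ pairs.
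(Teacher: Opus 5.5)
Your proof of \eqref{eq:frame} is the paper's: $M$ commutes with $I$, $I$ has no invariant line or plane, so $M$ is scalar and the trace gives $2$. One precision: what rules out an invariant line is that a rotation of order five keeps no line other than its axis. Rotations about non-parallel axes can in general share an invariant line (two perpendicular half-turns do), so the five-fold rotations are essential to the argument, not merely an example.

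For the ``in particular'' you take a genuinely different route. The paper cites the classical equiangularity $|a_i\cdot a_j|=1/\sqrt5$, which gives each of the five terms as $1/5$ directly; putting $u=a_0$ in \eqref{eq:frame} is then only a consistency check. You instead get the sum $1$ from \eqref{eq:frame} and split it equally using the $72^\circ$ rotation $\rho$ about $a_0$. This proves the value $1/\sqrt5$ rather than importing it. Together with the transitivity of $I$ on faces, it even recovers the equiangularity of all six axes.

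The price of your route is the bookkeeping that $a_0$ and the five neighbour normals represent all six axes, and your justification of it is incomplete. Your two clauses amount to ``no face is adjacent to its opposite''. That shows no neighbour lies on the axis of $F_0$, and that consecutive neighbours are not opposite. It says nothing about the non-adjacent pairs $F'_k$, $F'_{k+2}$. The gap closes with the tool you already use. Suppose two neighbour normals satisfied $a_k=-a_j$. Since $\rho$ cycles the neighbour normals, $a_k=\rho^m a_j$ for some $m\in\{1,\dots,4\}$. Then the rotation $\rho^m$, which has order five, keeps the line through $a_j$, so that line must be its axis $\R a_0$, which it is not. Alternatively, read it off your coordinates: the neighbours of $(0,1,\varphi)$ are $(0,-1,\varphi)$, $(\pm\varphi,0,1)$ and $(\pm1,\varphi,0)$, which lie on five distinct axes. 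With this line added, your argument is complete.
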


\begin{proof}
The matrix $S=\sum_i a_ia_i^{\top}$ is symmetric.  It commutes with every
rotation in $I$, because $I$ permutes the six axes and the sign of $a_i$ does
not affect $a_ia_i^{\top}$.  Each eigenspace of $S$ is therefore invariant
under $I$.  But $I$ fixes no line in $\R^3$: a line kept by a rotation of order five
must be its axis, and $I$ contains five-fold rotations about different axes.
Hence it fixes no plane either (the normal of an invariant plane
would be an invariant line).  So $S$ has a single eigenspace, $S=\lambda\,
\mathrm{Id}$, and $3\lambda=\operatorname{tr}S=\sum|a_i|^2=6$.  For the second
statement, the six axes are equiangular, $|a_i\cdot a_j|=1/\sqrt5$ for
$i\ne j$, and $u=a_0$ gives $1+5\cdot\tfrac15=2$.
\end{proof}

For the cube the three face axes form an orthonormal frame and
\eqref{eq:frame} with constant $1$ is the theorem of Pythagoras in coordinates;
de Gua's theorem is its statement about areas \cite{ConantBeyer1974}.
Proposition~\ref{prop:frame} has the same reading: a planar region of area $A$
casts shadows on the six face planes of the dodecahedron whose squared areas
add up to $2A^2$.  For the crown face itself this is a dodecahedral analogue
of de Gua's theorem: the shadows of the five neighbouring faces on the plane
of $F_0$ have area $A/\sqrt5$ each, and their squares add up to $A^2$.

If the five children of a node grow along the five neighbouring face axes and
are scaled by the cosine of their tilt, which is the cosine rule of Bosman's
tree with the five axes of Proposition~\ref{prop:frame} in place of two
perpendicular legs, then their surface areas add up exactly to the parent's.
It turns out that the twist of \eqref{eq:maps} can produce exactly this.

\begin{proposition}\label{prop:kiss}
Let $g^*=1/\sqrt5$, $\theta^*=\arccos(1/\sqrt5)\approx63.43^\circ$ (the angle
between adjacent face axes) and $d^*=1+g^*$.  For each $k$ let $F'_k$ be the
face of $B$ that meets the crown $F_0$ along $E_k$, with normal $n_k$.  Then
the child $f_k(B)$
\begin{enumerate}
\item has its crown axis along $n_k$, so it grows along a neighbouring face
axis with scale $g^*=a_0\cdot n_k$;
\item lies on the far side of the plane of $F'_k$, its base face lying in that
plane; the base face covers a small triangle of $F'_k$ next to the edge $E_k$,
so child and parent touch along that triangle but their interiors are disjoint.
\end{enumerate}
\end{proposition}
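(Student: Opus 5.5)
The plan is to check both claims directly from the geometry of the edge $E_k$. The key facts are that $e_k$ is perpendicular to both $a_0$ and $n_k$, and that the unit vector $m_k$ bisects them.

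\textbf{Step 1 (the crown axis).} Since $E_k$ lies in both $F_0$ and $F'_k$, the vector $e_k$ is orthogonal to $a_0$ and to $n_k$. Put $c_k=e_k\times a_0$. Because the edges are oriented counter-clockwise seen from outside, $c_k$ is the unit vector in the plane of $F_0$ pointing from the face centre towards $m_k$. Rodrigues' formula gives
\[
  \Rot(e_k,\theta)\,a_0=\cos\theta\,a_0+\sin\theta\,c_k .
\]
The normal $n_k$ lies in the plane spanned by $a_0$ and $c_k$, tilted outward towards $m_k$. By Proposition~\ref{prop:frame} its angle with $a_0$ is $\arccos(1/\sqrt5)=\theta^*$. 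So $n_k=\cos\theta^*a_0+\sin\theta^*c_k$, and $\Rot(e_k,\theta^*)a_0=n_k$. The crown of $f_k(B)$ therefore has normal $n_k$, and $g^*=1/\sqrt5=a_0\cdot n_k$. The one delicate point is the sign convention: the right-handed twist must tilt $a_0$ outward rather than inward. I would confirm this on the explicit $a_0$ and one edge of $F_0$ from the vertex list.

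\textbf{Step 2 (the base plane).} The midradius is $1$ and the midpoint $m_k$ lies on both faces, so $m_k$ is the unit bisector of $a_0$ and $n_k$. Hence $m_k\cdot a_0=m_k\cdot n_k=\cos(\theta^*/2)$. The inradius is the distance from the centre to the plane of $F_0$, which gives $r_{\mathrm{in}}=m_k\cdot a_0=\cos(\theta^*/2)$. The plane of $F'_k$ is $\{x:n_k\cdot x=r_{\mathrm{in}}\}$.

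The child is centred at $d^*m_k$. Its base face is the image of the face opposite the crown, so it has outward normal $-n_k$ and lies at distance $g^*r_{\mathrm{in}}$ from the child's centre. The base plane is therefore
\[
  n_k\cdot x=d^*\cos(\theta^*/2)-g^*r_{\mathrm{in}}=(1+g^*)r_{\mathrm{in}}-g^*r_{\mathrm{in}}=r_{\mathrm{in}} ,
\]
which is exactly the plane of $F'_k$. The rest of the child lies on the side $n_k\cdot x\ge r_{\mathrm{in}}$, while $B$ lies on the side $n_k\cdot x\le r_{\mathrm{in}}$. So the plane separates them, and their interiors are disjoint.

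\textbf{Step 3 (the contact region).} It remains to identify $f_k(B)\cap F'_k$, the intersection of two coplanar regular pentagons. The base face is centred at the foot $d^*m_k-g^*r_{\mathrm{in}}n_k$ and has circumradius $g^*$ times that of a face. Its angular position about $n_k$ is fixed by the rotation $\Rot(e_k,\theta^*)$ applied to the face $-F_0$.

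Using $D_5$ symmetry (Proposition~\ref{prop:sym}), I would treat a single $k$ in explicit coordinates. I would compute the five vertices of the base face and intersect that pentagon with $F'_k$ by clipping against the five edge lines of $F'_k$. The expected outcome is that only one vertex of the base face, or one corner region, falls inside $F'_k$, adjacent to $E_k$, so the overlap is a small triangle.

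This step is the main obstacle. It is a finite exact computation in $\mathbb{Q}(\sqrt5)$, but the orientation of the child's pentagon must be tracked carefully. A symbolic check is safer than a picture. I would first verify that the base face's centre lies outside $F'_k$ beyond the line of $E_k$, and then locate which of its vertices cross that line.
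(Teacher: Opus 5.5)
Your Steps 1 and 2 follow the paper's proof almost verbatim. All of $a_0,n_k,m_k$ are orthogonal to $e_k$ and $m_k$ bisects the angle $\theta^*$, so $\Rot(e_k,\theta^*)a_0=n_k$. The base plane is $n_k\cdot x=d^*r_{\mathrm{in}}-g^*r_{\mathrm{in}}=r_{\mathrm{in}}$, which is the plane of $F'_k$. Your sign check via $c_k=e_k\times a_0$ is correct, and it is more than the paper writes out.

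The gap is Step 3. As written it does not prove the triangle claim. You leave open whether ``one vertex, or one corner region'' crosses $E_k$, and that is the whole content of the claim. The clipping computation is also unnecessary, because the orientation of the base pentagon follows from your own Step 1:
\begin{itemize}
\item The base face is the image of $-F_0$. $F_0$ has its vertex opposite $E_k$ at $r_{\mathrm{in}}a_0-R_fc_k$, where $R_f$ is the face circumradius. So $-F_0$ has a vertex at $-r_{\mathrm{in}}a_0+R_fc_k$, in direction $c_k$ from its centre.
\item Rodrigues gives $u_k:=\Rot(e_k,\theta^*)c_k=\cos\theta^*c_k-\sin\theta^*a_0$.
\item A two-line check in the $(a_0,c_k)$-plane gives $r_{\mathrm{in}}n_k-m_k=\rho\,u_k$, with $\rho=\sin(\theta^*/2)$ the face inradius. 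So $u_k$ is the unit vector from $m_k$ towards the centre of $F'_k$.
\item Your foot point is $d^*m_k-g^*r_{\mathrm{in}}n_k=m_k-g^*\rho\,u_k$. Hence the base face is centred $g^*\rho$ beyond the line of $E_k$, on its perpendicular bisector, and has a vertex pointing straight back along $u_k$.
\end{itemize}
That vertex is $f_k(-r_{\mathrm{in}}a_0+R_fc_k)=m_k+g^*(R_f-\rho)u_k$, so it crosses the edge line by $g^*(R_f-\rho)\approx0.0555$. Its two neighbours sit at signed depth $g^*(R_f\cos72^\circ-\rho)<0$, so they stay outside.

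The overlap is therefore the triangle cut off the $108^\circ$ corner. It has height $\approx0.0555$ and half-width $g^*(R_f-\rho)\tan54^\circ\approx0.076$, and it is centred at $m_k$. The half-edge is $\varphi^{-2}\approx0.38$, and the edges of $F'_k$ leaving the endpoints of $E_k$ splay outward, so the triangle lies inside $F'_k$. This is the paper's argument, and it replaces your symbolic clipping entirely.
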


\begin{proof}
The vectors $a_0$, $n_k$ and $m_k$ are all orthogonal to $e_k$, the angle from
$a_0$ to $n_k$ is $\theta^*$, and $m_k$ bisects it.  So $R=\Rot(e_k,\theta^*)$
maps $a_0$ to $n_k$, which is (1), and maps the normal $-a_0$ of the base face
(the face opposite the crown) to $-n_k$.  The base face of the child therefore
lies in the plane $\{x\cdot n_k=c\}$ with
$c = d^*\,m_k\cdot n_k - g^*\,r_{\mathrm{in}} = (1+g^*)r_{\mathrm{in}}-g^*r_{\mathrm{in}}=r_{\mathrm{in}}$,
using $m_k\cdot n_k=r_{\mathrm{in}}$ (the midpoint of $E_k$ lies on $F'_k$).
That plane is the plane of $F'_k$, which supports $B$, so the two solids lie on
opposite sides of it.  In that plane the centre of the child's base face lies
$g^*\rho$ beyond the line of $E_k$, where $\rho$ is the inradius of a face,
and the base face has a vertex pointing straight back at $E_k$.  Since the
circumradius $R_f$ of a face exceeds $\rho$, that vertex crosses the edge line
by $g^*(R_f-\rho)\approx0.0555$, while the next two vertices stay outside.
The contact region is therefore a triangle of area $\approx0.0042$, about
$2\%$ of the child's face.
\end{proof}

Moving the child any closer pushes the contact triangle into the parent, so
$d^*=1+g^*$ is the \emph{kissing distance}: the smallest offset at which child
and parent do not overlap.  (For convex bodies translated along a ray the
offsets that produce an overlap form an interval, so this distance is well
defined.)  The kissing distance is exactly $1+g$ for every $g$ at
$\theta\in\{0^\circ,\theta^*,90^\circ\}$, and by the mirror symmetry of
Proposition~\ref{prop:sym} also at $180^\circ-\theta$.  At $0^\circ$ and
$90^\circ$ the reason is different from the face contact at $\theta^*$: the
child then presents an edge midpoint to the parent's edge midpoint (a parallel
edge at $0^\circ$, a crossed one at $90^\circ$), and the plane
$\{x\cdot m_k=1\}$ separates the two solids once $d\ge1+g$.  At other twists
the ratio $(d-1)/g$ at kissing varies: numerically it is $0.869$, $0.874$ and
$1.015$ at $20^\circ$, $45^\circ$ and $80^\circ$.  The artwork's $0.824$ is
below the kissing value at $45^\circ$, which is the joint.

\begin{theorem}\label{thm:pythtree}
The tree $T^*$ defined by $(g^*,d^*,\theta^*)$ has the following properties.
\begin{enumerate}
\item \emph{(Pythagorean)} The five children of every node have, together,
exactly the surface area of their parent.  Every generation has the same total
surface area, and the total volume is $\vol(B)/(1-5^{-1/2})\approx1.81\,\vol(B)$.
\item \emph{(Packing)} Its dodecahedra have pairwise disjoint interiors.  Each
child touches its parent along a triangle and touches nothing else.
\item \emph{(Dimension two)} Its crown set $K^*$ has Hausdorff dimension
exactly $2$ and finite, positive two-dimensional Hausdorff measure.
\end{enumerate}
\end{theorem}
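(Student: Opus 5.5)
Part (1) is pure scaling.  Each child is a similar copy of $B$ with ratio $g^*=1/\sqrt5$, so its surface area is $g^{*2}\operatorname{area}(\partial B)=\tfrac15\operatorname{area}(\partial B)$.  The five children therefore have together exactly the parent's area; this is the identity $\sum_{k=1}^5(a_0\cdot a_k)^2=1$ of Proposition~\ref{prop:frame}, combined with $g^*=a_0\cdot n_k$ from Proposition~\ref{prop:kiss}(1).  Generation $n$ has $5^n$ nodes of scale $5^{-n/2}$, so its total area is $5^n\cdot5^{-n}\operatorname{area}(\partial B)$, the same for every $n$.  Its total volume is $5^n\cdot5^{-3n/2}\vol(B)$.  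Summing the geometric series gives $\vol(B)/(1-5^{-1/2})$.  Because (2) gives disjoint interiors, this is the genuine volume of $T^*$ and not merely a count with multiplicity.

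For (2) I would follow the proof of Theorem~\ref{thm:avoid}.  Lemma~\ref{lem:reduction} is stated for general $g,d,\theta$.  So I would build an invariant polytope $Q^*\supseteq B$ with $f_k(Q^*)\subseteq Q^*$ by the same support-function iteration, and then certify conditions (i) and (ii) by the same largest-inscribed-ball linear programs.  Condition (i) needs a negative optimum.  There is a subtlety in (ii): it concerns grandchildren against $B$, and children are excluded, so no tangency appears and a positive margin is expected.  Lemma~\ref{lem:reduction} yields that all non-parent–child pairs have disjoint interiors.  For parent–child pairs, Proposition~\ref{prop:kiss}(2) gives disjoint interiors with contact exactly along a triangle, and this transports to every node by applying $f_w$.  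The clause ``touches nothing else'' needs more than disjoint interiors: non-parent–child pairs must be at positive distance.  A strictly negative LP optimum in (i) and (ii) certifies exactly that.

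I expect the main obstacle in (2) to be that the certification margins may be much smaller than at the artwork's parameters.  Figure~\ref{fig:phase} places the largest certified scales near $\theta\approx65^\circ$ and $g\approx0.46$, so $(g^*,\theta^*)$ should lie inside the safe region, but perhaps close to its boundary.  If the margin in (i) is tiny, I would first refine $Q^*$ with more directions, for instance all face normals of the depth-$5$ hull.  If that fails, I would use exact arithmetic in $\mathbb{Q}(\sqrt5)$: every map has entries in that field, since $\cos\theta^*=1/\sqrt5$ and $\sin\theta^*=2/\sqrt5$.

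For (3), Moran's equation $5(5^{-1/2})^s=1$ gives $s=2$.  The pieces $f_k(K^*)\subseteq f_k(T^*)$ are pairwise a positive distance apart.  The certificate for (i) gives this because $K^*\subseteq T^*\subseteq P$.  Hence the strong separation condition, and a fortiori the open set condition, holds.  By Hutchinson's theorem \cite{Hutchinson1981,Falconer2014}, $\dim_H K^*=2$ and $0<\mathcal H^2(K^*)<\infty$.
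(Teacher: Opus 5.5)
Your proposal is correct and follows the paper's proof. Part (1) comes from $5(g^*)^2=1$. In (2), parent--child pairs are handled by Proposition~\ref{prop:kiss} and all other pairs by the invariant-polytope certificate of Lemma~\ref{lem:reduction}. Part (3) uses the strong separation condition read off the positive margins in (i), together with Moran's equation.

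The obstacle you anticipate does not arise. At $(g^*,d^*,\theta^*)$ the paper's certificate has margins $4.5\times10^{-2}$ in (i) and $0.34$ in (ii), so the margin in (i) is larger than the artwork's $7.3\times10^{-3}$.
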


\begin{proof}
(1) is $5(g^*)^2=1$.  For (2), parent and child are separated by
Proposition~\ref{prop:kiss}, and all other pairs by the certificate of
Lemma~\ref{lem:reduction}, with margins $4.5\times10^{-2}$ in (i) and $0.34$ in
(ii).  For (3), $K^*\subseteq Q$ because $Q$ is invariant under the maps, so
$f_j(K^*)\subseteq f_j(P)$, and the positive margins in (i) show that the
compact sets $f_j(K^*)$ are pairwise disjoint.  This is the strong separation
condition, and the dimension is the solution of $5\,(1/\sqrt5)^s=1$, that is,
$s=2$ \cite{Hutchinson1981,Falconer2014}.
\end{proof}

So the dodecahedron supports a Pythagoras tree after all.  The right-angled
joint is replaced by five equiangular directions, and $\cos^2\alpha+\sin^2\alpha=1$
by $5\cdot(1/\sqrt5)^2=1$.  Figure~\ref{fig:pyth} shows $T^*$.  With the
artwork's joint offset $d=1+\kappa g^*$ in place of $d^*$, the scale $g^*$ is
certified self-avoiding at every twist we tested between $51.5^\circ$ and
$73^\circ$ (in steps of $0.5^\circ$), and the certificate fails just outside
this window.  At the artwork's $45^\circ$, however, the ratio $1/\sqrt5$
produces clashes; the artwork's $g$ lies $2.8\%$ below it, on the safe side of
$g_c$.

\begin{figure}[t]
  \centering
  \includegraphics[width=0.495\linewidth]{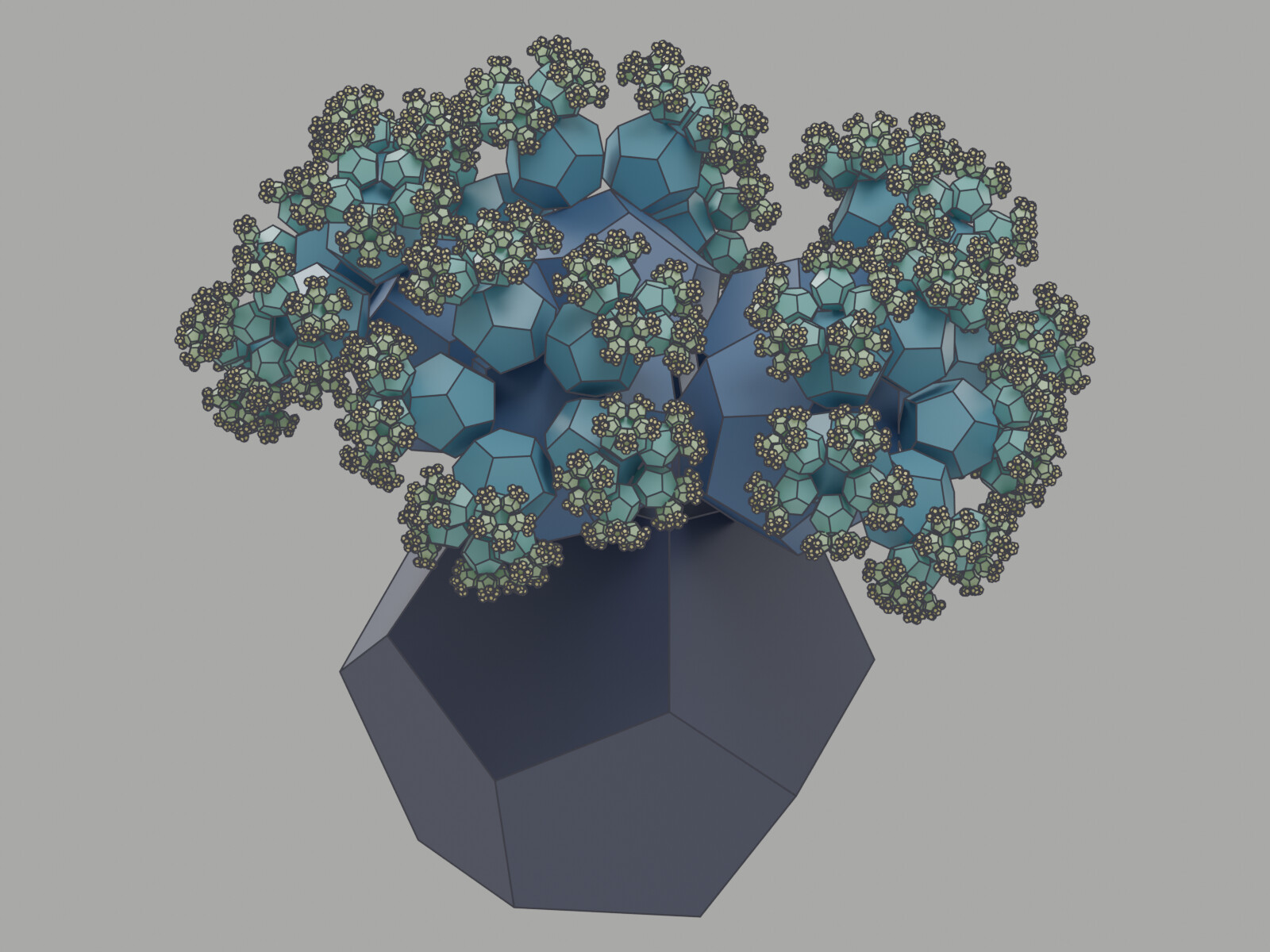}\hfill
  \includegraphics[width=0.495\linewidth]{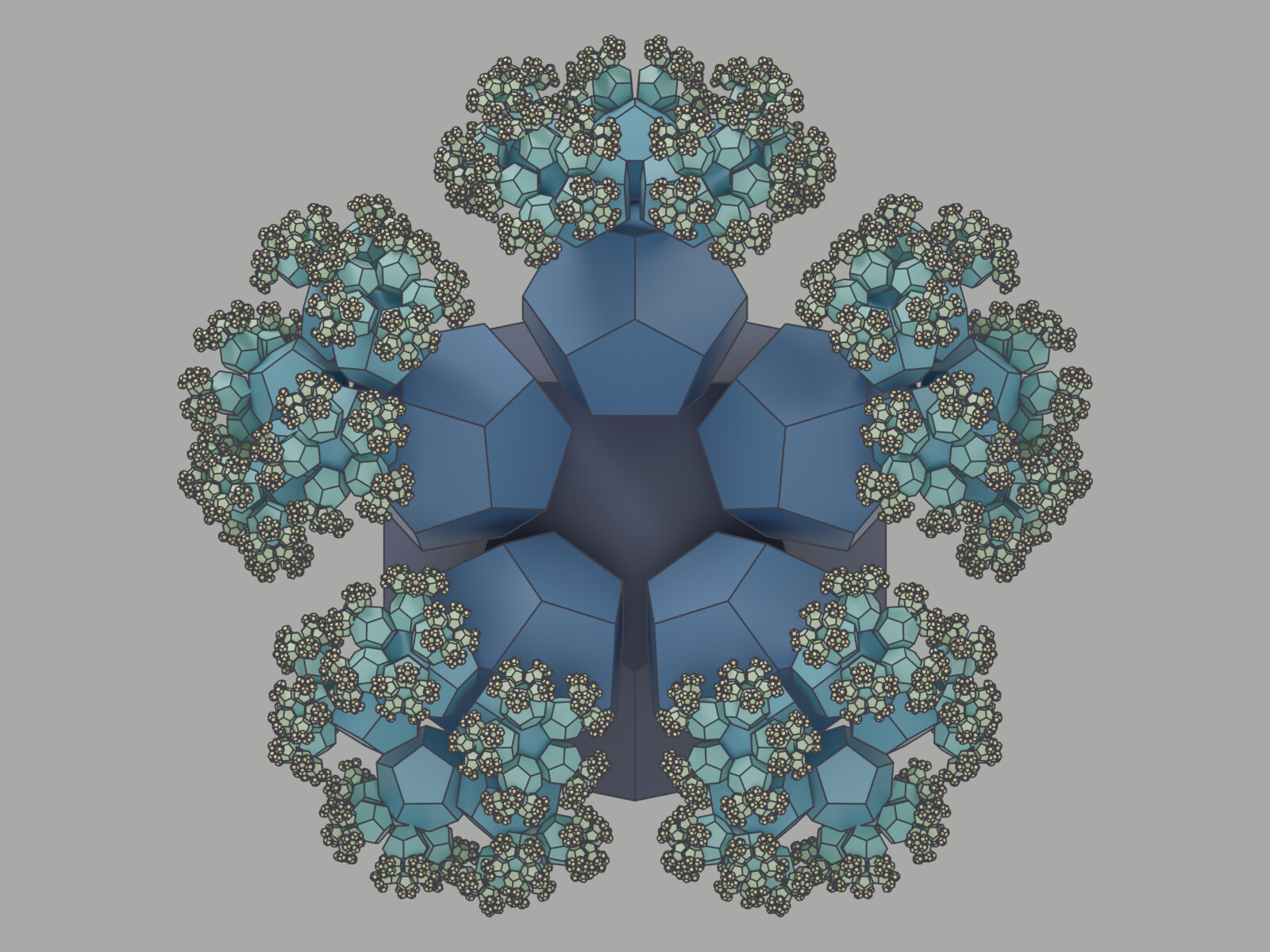}
  \caption{The Pythagorean dodecahedral tree $T^*$ ($g=1/\sqrt5$,
  $\theta=\arccos(1/\sqrt5)$, $d=1+1/\sqrt5$), depth $5$.  Every child stands
  on the plane of a lateral face of its parent and grows along that face's
  axis.}
  \label{fig:pyth}
\end{figure}

\section{Growing from every face}\label{sec:allfaces}

The artwork grows from one face only.  Could the tree grow from all twelve?
The answer depends on what ``growing from every face'' means; we consider three
readings.

\paragraph{Twelve crowns.}  Put a copy of the crown tree on every face,
$\bigcup_j\gamma_j(T)$ with $\gamma_j\in I$ mapping $F_0$ to $F_j$ (by
Proposition~\ref{prop:sym} the choice of $\gamma_j$ does not matter).  Every
edge lies on two faces, so it now carries two \emph{twin} children with the
same centre $d\,m$, twisted by $+\theta$ and $-\theta$ about the edge.  The
twins coincide when $\theta\in\{0^\circ,90^\circ\}$ modulo $180^\circ$, because
$\Rot(e,2\theta)$ is then a symmetry of $B$, and cross each other otherwise;
in both cases the tree clashes at depth $1$.  For the artwork's parameters
there are $90$ clashing pairs at depth $1$ and $6390$ by depth $4$.

\paragraph{One child per edge.}  The natural repair keeps a single child on
each of the $30$ edges.  This forces a choice of \emph{owner} for every edge,
one of its two faces.  The child is twisted by $+\theta$ about the edge
oriented counter-clockwise around its owner, and it carries the owner's
subtree.  Call two edges that meet at a vertex of a face $F$ a
\emph{positive} pair of $F$ if $F$ owns both, and a \emph{negative} pair if
$F$ owns neither.  A rotation taking $F$ to $F_0$ shows that the two children
of a positive pair are congruent to two adjacent crown siblings with twist
$\theta$, and those of a negative pair to siblings with twist $-\theta$,
equivalently $180^\circ-\theta$.

\begin{theorem}\label{thm:parity}
For every choice of owners there are at least six positive and at least six
negative pairs.  Both bounds are attained at once: there is an ownership with
exactly six pairs of each kind.
\end{theorem}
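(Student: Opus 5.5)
The plan is a double count over the $60$ corners of the faces. Every face $F$ has five edges and five vertices, so it has five pairs of edges meeting at a vertex of $F$, one for each vertex. Given an ownership, record around the boundary of $F$ a cyclic word of length $5$ in the letters ``owned by $F$'' ($1$) and ``not owned by $F$'' ($0$). The positive pairs of $F$ are the cyclically adjacent $11$'s and the negative pairs are the adjacent $00$'s. Let $o_F$ be the number of edges owned by $F$, and let $p_F,q_F$ be the numbers of positive and negative pairs of $F$. Counting letter--pair incidences (each letter lies in two of the five cyclic pairs) gives
\[
2o_F = 2p_F + x_F,\qquad 2(5-o_F) = 2q_F + x_F,
\]
where $x_F$ is the number of mixed pairs. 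Hence $p_F-q_F=2o_F-5$.

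I will then sum over the twelve faces. Every edge has exactly one owner, so $\sum_F o_F=30$. Summing the identity gives $\sum_F p_F-\sum_F q_F=60-60=0$, so the total numbers $P$ and $N$ of positive and negative pairs are equal. Next comes a parity step: a cyclic binary word of odd length $5$ cannot alternate, so every face has at least one monochromatic pair, $p_F+q_F\ge1$. Summing gives $P+N\ge12$, and with $P=N$ this yields $P=N\ge6$, which is the first claim.

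For attainment, equality forces $p_F+q_F=1$ for every face. The only cyclic words of length $5$ with exactly one monochromatic pair are the ``almost alternating'' ones, $11010$ or $00101$ up to rotation, so $o_F\in\{2,3\}$. From $\sum_F o_F=30$ over twelve faces, exactly six faces own three edges (one positive pair each) and six own two (one negative pair each). Conversely, any ownership in which every face's word is almost alternating attains both bounds. So I need to exhibit one such ownership.

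That construction is the main obstacle, since it is a global constraint problem on the $30$ edges. My first attempt is a symmetric ansatz: let one face of each antipodal pair be a ``3-face'', chosen for instance as the crown $F_0$ and the five faces of one of the two pentagonal belts suitably alternated. I would then try to orient the choices so that an order-$3$ or order-$5$ rotation preserves the pattern, which cuts the verification to one or two orbits of faces. If no clean symmetric description emerges, I will fall back on an explicit table listing the owner of each of the $30$ edges, found by a small exhaustive or backtracking search over $2^{30}$ choices pruned face by face. The table can then be checked by hand: each face's word must contain exactly one monochromatic pair.
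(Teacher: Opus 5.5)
Your lower bound is correct, and it is argued differently from the paper. The paper bounds each face on its own: $c$ marked edges of a $5$-cycle contain at least $\max(0,c-2)$ adjacent pairs, so $P\ge\sum_F(o_F-2)=6$ and $N\ge\sum_F(3-o_F)=6$. You instead prove the exact identity $p_F-q_F=2o_F-5$. Summed over faces it gives $P=N$ for \emph{every} ownership, and after that you only need the fact that a $5$-cycle cannot alternate. This gives more than the paper states. The paper's remark that its optimal assignment ``has exactly six positive pairs as well'' becomes automatic. Your description of the extremal case (every face reads $11010$ or $00101$, six of each) also explains why every face there owns two or three edges.

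The gap is the attainment half. You reduce it correctly, but then only describe what you would try. No ownership is exhibited, so the second sentence of the theorem is not proved. (The paper finds one by an integer program and shows it in Figure~\ref{fig:parity}.)

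It can be done by hand using a local form of your identity. Orient each edge so that its owner lies on its left, seen from outside. Then $F$ owns an edge exactly when the edge runs counterclockwise around $F$. So the corner of $F$ at $v$ is a positive or negative pair exactly when one of the two edges of $F$ at $v$ enters $v$ and the other leaves it. Sources and sinks therefore contribute nothing. At any other vertex the edge directions switch once each way around $v$, which gives exactly one positive and one negative pair. Hence $P=N$ equals the number of vertices that are neither sources nor sinks.

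It therefore suffices to find $14$ vertices inducing a bipartite subgraph. Make one colour class sources and the other sinks; this orients every edge meeting those vertices consistently. Orient the remaining edges arbitrarily. At most six vertices are then neither sources nor sinks, so $P=N\le6$, and your lower bound gives equality.

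For an explicit choice, label the crown's vertices $a_0,\dots,a_4$ cyclically. Let $b_i$ be the third neighbour of $a_i$, $c_i$ the common neighbour of $b_i$ and $b_{i+1}$, and $d_i$ the third neighbour of $c_i$ (indices mod $5$). Delete the three antipodal pairs $\{a_3,d_0\}$, $\{b_1,c_3\}$, $\{b_2,c_4\}$. What remains is two trees, with edges $a_0a_1,a_1a_2,a_0a_4,a_0b_0,a_4b_4,b_0c_0$ and $c_1d_1,d_1d_2,d_2c_2,c_2b_3,d_2d_3,d_3d_4$. A forest is bipartite, so the construction above yields an ownership with exactly six pairs of each kind.
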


\begin{proof}
Let face $F_j$ own $o_j$ of its five edges.  Each edge has one owner, so
$\sum_j o_j=30$ and $\sum_j(5-o_j)=30$.  On the $5$-cycle of edges of a face,
$c$ marked edges always include at least $\max(0,c-2)$ adjacent pairs, since
at most two of five edges can be pairwise non-adjacent.  Hence there are at
least $\sum_j(o_j-2)=30-24=6$ positive pairs, and likewise
$\sum_j(5-o_j-2)=6$ negative ones.  An integer program over the $2^{30}$
ownership choices finds an assignment with exactly six negative pairs
(Figure~\ref{fig:parity}, right); every face owns two or three edges, and
counting shows that it has exactly six positive pairs as well.
\end{proof}

So any one-child-per-edge tree contains siblings twisted both ways, by
$\theta$ and by $180^\circ-\theta$.  For the artwork's $g$ and $d$, a pair of
adjacent siblings twisted coherently by $\theta$ is clash-free only for
$\theta\in[38.3^\circ,78.3^\circ]$ modulo $180^\circ$ (Figure~\ref{fig:parity},
left; computed on a $0.1^\circ$ grid), and this interval contains no $\theta$
together with $180^\circ-\theta$: the larger of the two overlaps never drops
below $0.058$ of the child's size.

\begin{corollary}
At the artwork's proportions, no one-child-per-edge extension to all twelve
faces is clash-free, whatever the twist and the owners.  At $\theta=45^\circ$,
optimal owners still leave exactly six clashing pairs at depth $1$, and deeper
generations add more ($48$ at depth $3$).
\end{corollary}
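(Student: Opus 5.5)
The first claim, that no one-child-per-edge extension is clash-free at the artwork's proportions, follows by combining Theorem~\ref{thm:parity} with the sibling-pair computation stated just before the corollary.

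Fix any twist $\theta$ and any ownership. By Theorem~\ref{thm:parity} there is at least one positive pair and at least one negative pair. Applying the rotation that takes the relevant face to $F_0$, the positive pair is congruent to two adjacent crown siblings with twist $\theta$. The negative pair is congruent to siblings with twist $-\theta$, equivalently $180^\circ-\theta$.

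Both pairs are clash-free only if $\theta$ and $180^\circ-\theta$ both lie in the safe interval $[38.3^\circ,78.3^\circ]$ modulo $180^\circ$. To rule this out cleanly, I would note that the interval together with its image under $\theta\mapsto 180^\circ-\theta$, namely $[101.7^\circ,141.7^\circ]$, gives two disjoint arcs modulo $180^\circ$. So at least one of the two pairs overlaps, and by the stated margin the overlap is at least $0.058$ of the child's size. Already at depth~$1$ some pair of siblings clashes, whatever $\theta$ and the owners.

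The quantitative statement at $\theta=45^\circ$ needs two halves. For the lower bound, $45^\circ$ lies in the safe interval and $135^\circ$ does not, so every negative pair clashes. Theorem~\ref{thm:parity} gives at least six negative pairs, hence at least six clashing pairs at depth~$1$. For the matching upper bound, take the optimal ownership of Theorem~\ref{thm:parity}, which has exactly six negative pairs. I would then check numerically that its depth-$1$ configuration, the parent plus $30$ children, has no further clashes.

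This check covers several kinds of pair. Positive pairs are safe by the interval. Pairs of children on edges that do not share a vertex are far apart. Children on edges meeting at a vertex but belonging to no common face are a third kind, and they must also be tested. Finally, each child must be checked against the parent beyond the designed joint. All of these are finite convex-polytope disjointness tests, done by the separating-axis test or the LP of Theorem~\ref{thm:avoid}.

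The count of $48$ clashing pairs at depth~$3$ is then a direct enumeration. I would build the depth-$3$ all-faces tree for the optimal ownership at $\theta=45^\circ$ and run the same bounding-sphere filter and exact separating-axis test used in the independent check of Theorem~\ref{thm:avoid}, counting non-parent-child pairs with overlapping interiors.

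I expect the main obstacle to be the phrase ``optimal owners''. The count $48$, and even the depth-$1$ count of exactly six, should not depend on which optimal ownership is chosen. I would handle this in two steps. First, enumerate all ownerships attaining six negative pairs using the integer program with an exclusion cut after each solution. Then reduce them modulo the action of $I_h$, taking care that reflections swap the senses of the twists, and verify the counts on each orbit representative.
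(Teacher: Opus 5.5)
Your proposal is correct and follows the paper's route: Theorem~\ref{thm:parity} supplies both a positive and a negative pair, the coherent clash-free interval $[38.3^\circ,78.3^\circ]$ is disjoint modulo $180^\circ$ from its mirror image $[101.7^\circ,141.7^\circ]$, and the counts at $45^\circ$ are direct computations on an optimal ownership. Your bookkeeping needs one correction. The ``third kind'' is empty, because every vertex of the dodecahedron has degree three, so any two edges meeting there bound a common face. The adjacent pairs your depth-$1$ check actually has to clear are the $48$ \emph{mixed} pairs, where the face owns exactly one of the two edges; your list omits them, although the full numerical check still covers them. Moreover, the equality case of the counting in Theorem~\ref{thm:parity} forces every face to own two or three edges, which gives exactly $6$ positive, $6$ negative and $48$ mixed pairs for \emph{every} optimal ownership. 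So enumerating ownerships modulo $I_h$ is needed only for the depth-$3$ count, where the paper simply reports the ownership its integer program found.
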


Smaller children escape the obstruction.  Figure~\ref{fig:phase} (bottom)
shows the certified region for the best owners.  At $\theta=45^\circ$ every
scale we tested up to $g=0.35$ (in steps of $0.0125$) is certified
self-avoiding at all depths, and the largest certified scale is
$g=\ALLMAXG$, at $\theta=\ALLMAXTH$.
Figure~\ref{fig:allfaces} shows both sides of the boundary.

\paragraph{Every node sprouts everywhere.}  The most literal reading lets every
node grow a child on each of its own $30$ edges.  Here a simple volume count
rules out the artwork's scale.

\begin{proposition}\label{prop:volume}
If a tree with $N$ maps of ratio $g$ is self-avoiding, then $Ng^3\le1$.
\end{proposition}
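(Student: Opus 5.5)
We prove the bound by a volume count over the generations. Self-avoidance makes the nodes essentially disjoint, so their volumes add up, while every node must lie inside one fixed bounded set. Concretely, let the maps be $f_1,\dots,f_N$, each a similarity of ratio $g$. As in \eqref{eq:tree}, let the tree be $T=B\cup f_1(T)\cup\dots\cup f_N(T)$. Generation $n$ then consists of the $N^n$ nodes $f_w(B)$ with $|w|=n$, and each has volume $g^{3n}\vol(B)$. By the Banach argument of Section~\ref{sec:construction}, $T$ is compact, so every node lies in a fixed ball $\mathcal B$ of finite volume.

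The first key step is to show that distinct nodes of the same generation have disjoint interiors. Two such nodes $f_u(B)$ and $f_v(B)$, with $u\neq v$ and $|u|=|v|=n$, cannot be parent and child, because their depths are equal. Self-avoidance (no clashes) therefore says exactly that their interiors are disjoint. Since $\partial B$ has measure zero, the volumes add, and
\[
  N^n g^{3n}\vol(B)=\vol\Bigl(\bigcup_{|w|=n}f_w(B)\Bigr)\le\vol(\mathcal B)
\]
holds for every $n$. If $Ng^3>1$, the left side tends to infinity as $n\to\infty$, which is a contradiction. Hence $Ng^3\le1$.

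The main obstacle is the boundedness step, and in particular the words ``the same bounded set for all $n$''. This is where the contraction is used: $W$ maps a large ball $\mathcal B$ of radius $\rho$ into itself as soon as $\rho(1-g)\ge\max_k|f_k(0)|$ and $\rho\ge$ the circumradius of $B$. Such a $\rho$ exists because $g<1$. By induction, every $f_w(B)$ then lies in $\mathcal B$.

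A second point to check is that the argument does not rely on comparing nodes of different depths. Those pairs may legitimately overlap when they are parent and child, which is the designed joint. Using a single generation at a time sidesteps this, and summing over all generations is unnecessary. For the all-thirty-edges reading one takes $N=30$, and the artwork's $g$ gives $30g^3\approx2.46>1$, which is what rules it out. This remark follows the proof, with the dotted curve $g=30^{-1/3}$ in Figure~\ref{fig:phase}.
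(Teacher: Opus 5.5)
Your proof is correct and follows the paper's argument: nodes of the same depth cannot be parent and child, so self-avoidance makes their interiors disjoint, and comparing $N^n g^{3n}\vol(B)$ with the volume of one fixed invariant ball forces $Ng^3\le1$. Your explicit radius condition $\rho(1-g)\ge\max_k|f_k(0)|$, together with $\rho$ at least the circumradius of $B$, simply spells out the paper's remark that ``a large enough ball will do.''
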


\begin{proof}
Nodes of the same depth are never parent and child, so the $N^n$ nodes of
depth $n$ have disjoint interiors.  They all lie in a bounded invariant set
$Q$ (a large enough ball will do), so $N^ng^{3n}\vol(B)\le\vol(Q)$ for every
$n$, which forces $Ng^3\le1$.
\end{proof}

For $N=30$ this requires $g\le30^{-1/3}\approx0.322$, whereas the artwork has
$30g^3\approx2.46$.  In fact every scale we tried, down to $g=0.2$, clashes by
depth $2$, and in each case some grandchildren grow back into their
grandparent.

\begin{figure}[t]
  \centering
  \includegraphics[width=0.49\linewidth]{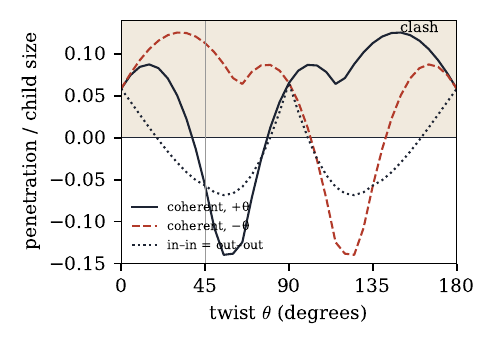}\hfill
  \includegraphics[width=0.40\linewidth]{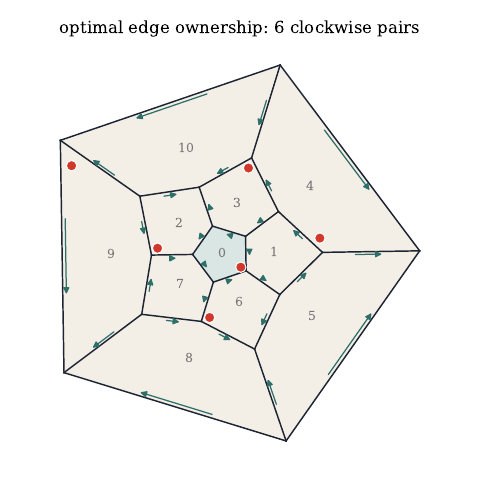}
  \caption{Left: overlap of two adjacent children (artwork's $g,d$) for the
  four relative orientations of their twists; values above zero are clashes.
  The two ``coherent'' curves are translates of each other by $180^\circ$, and
  no twist keeps both below zero.  Right: an optimal edge ownership on a
  Schlegel diagram (face $11$, opposite the crown, is outermost).  Arrows show
  the counter-clockwise orientation around the owner, and dots mark the six
  unavoidable negative pairs.}
  \label{fig:parity}
\end{figure}

\begin{figure}[t]
  \centering
  \includegraphics[width=0.49\linewidth]{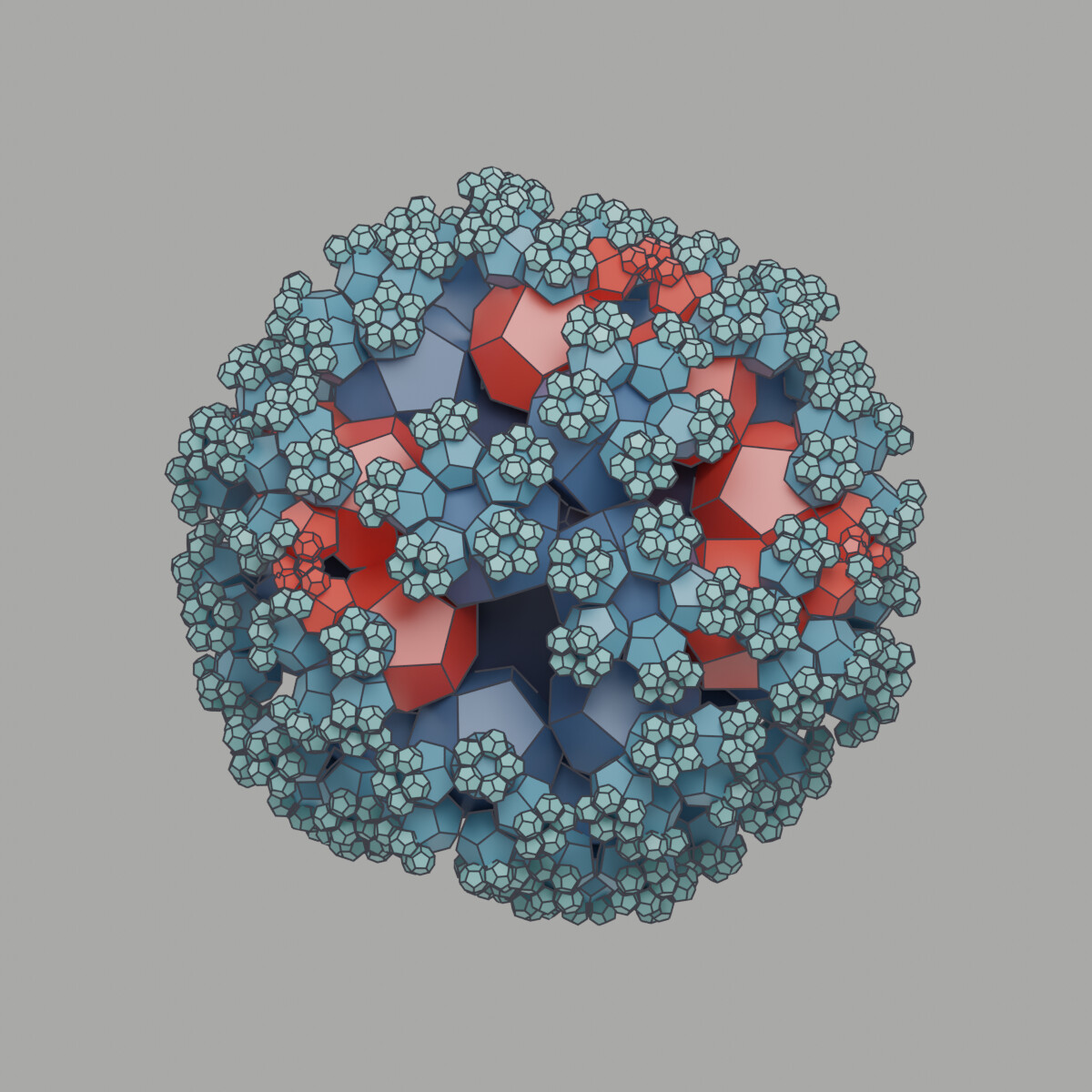}\hfill
  \includegraphics[width=0.49\linewidth]{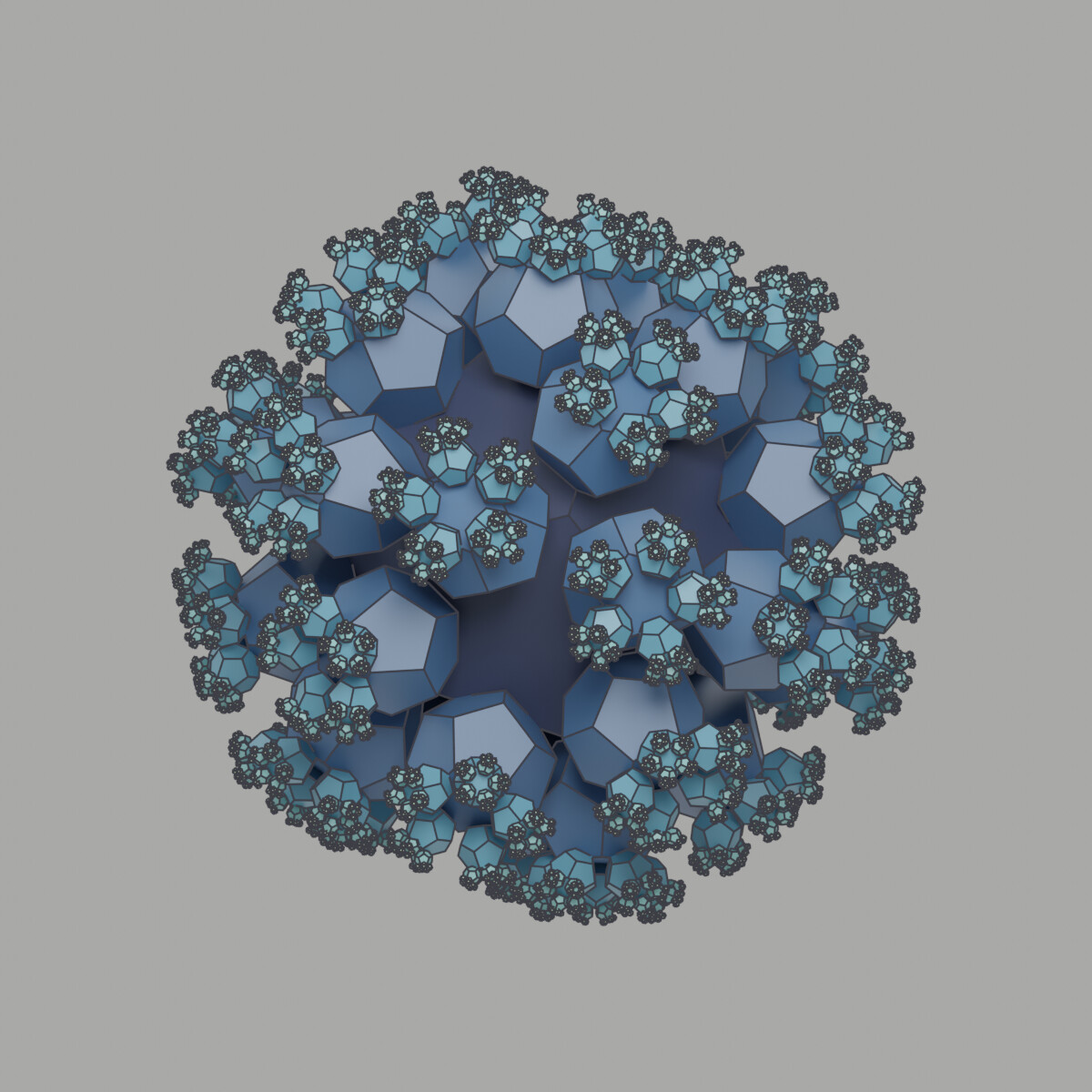}
  \caption{Growing from all twelve faces with optimal owners.  Left: the
  artwork's proportions ($g\approx0.435$, depth~3); the dodecahedra involved
  in clashes are red.  Right: $g=0.35$ (depth~4), certified self-avoiding.}
  \label{fig:allfaces}
\end{figure}

\section{Closing remarks}

The artwork reached its proportions by eye, through a factor $0.36$, a
$45^\circ$ twist and a constant called \texttt{gold} that is not the golden
ratio.  It sits just inside the region where the tree avoids itself, and
$2.8\%$ below the scale at which its surface area would be conserved from one
generation to the next.  Moving to that scale requires a larger twist,
$\theta^*=\arccos(1/\sqrt5)$, and then everything locks into place: children
rest flat on their parents' faces, the surface identity is the dodecahedron's
own Pythagorean theorem, and the crown becomes a set of dimension exactly two.

Some questions remain open.  The certificate decides individual parameter
points; the exact shape of the boundary curves in Figure~\ref{fig:phase} is
unknown, as is whether the kissing distance $1+g$ occurs only at the twists
we found.  Non-uniform twists, unequal scales, or children on vertices
instead of edges would all give new trees.  The same constructions could be
tried on the $120$-cell, whose cells are dodecahedra.

\section*{Acknowledgements}

I thank Eduardo Virue\~na Silva, professor at the Escuela Superior de F\'isica
y Matem\'aticas of the Instituto Polit\'ecnico Nacional
(\url{https://michelle.esfm.ipn.mx/eduardo.viruena/}).  His classes are the
reason I came to love programming and recreational mathematics, and his
lectures directly inspired this work.

\section*{Code and data}

All code and data are public at
\url{https://github.com/MiguelPerezI/dodecahedron-pythagoras-tree}: the Python
package \texttt{dodecatree} (geometry, the faithful port of the original
programs, the exact construction, collision tests and the certificate), the
original C\texttt{++} sources, and scripts that rebuild the original programs,
compare them with the Blender scene, run the clash study, and generate every
figure and number in this article.  STL and OBJ models of all trees shown can
be regenerated with \texttt{scripts/make\_tree.py}.

\end{document}